\theoremstyle{plain}
\newtheorem{thm}{\protect\theoremname}
\theoremstyle{plain}
\newtheorem{prop}[thm]{\protect\propositionname}
\theoremstyle{definition}
\newtheorem{example}[thm]{\protect\examplename}
\providecommand{\examplename}{Example}
\providecommand{\propositionname}{Proposition}
\providecommand{\theoremname}{Theorem}
\begin{document}

\title{\Huge Kernel Embedding based Variational Approach for Low-dimensional Approximation of Dynamical Systems}
\author{Wenchong Tian \footnote{College of Environmental Science and Engineering, Tongji University, Shanghai 200092, P.~R.~China, E-mail: wenchong@tongji.edu.cn}, Hao Wu \footnote{Correspond Author, School of Mathematical Sciences, Tongji University, Shanghai 200092, P.~R.~China, E-mail: hwu@tongji.edu.cn}}

\maketitle

\begin{abstract}
{Transfer operators such as Perron-Frobenius or Koopman operator play a key role in modeling and analysis of complex dynamical systems, which allow linear representations of nonlinear dynamics by transforming the original state variables to feature spaces.
However, it remains challenging to identify the optimal low-dimensional feature mappings from data.
The variational approach for Markov processes (VAMP) provides a comprehensive framework for the evaluation and optimization of feature mappings based on the variational estimation of modeling errors, but it still suffers from a flawed assumption on the transfer operator and therefore sometimes fails to capture the essential structure of system dynamics.
In this paper, we develop a powerful alternative to VAMP, called kernel embedding based variational approach for dynamical systems (KVAD). By using the distance measure of functions in the kernel embedding space, KVAD effectively overcomes
theoretical and practical limitations
of VAMP. In addition, we develop a data-driven KVAD algorithm for seeking the ideal feature mapping within a subspace spanned by given basis functions, and numerical experiments show that the proposed algorithm can significantly improve the modeling accuracy compared to VAMP.}
\end{abstract}

\section{Introduction}

It has been shown that complex nonlinear processes can be accurately
described by linear models in many science and engineering fields,
including wireless communications \cite{konrad2003a,yue2001Composite},
molecular dynamics \cite{wu2017variational,chodera2014markov,wu2016multiensemble},
fluid dynamics \cite{mezic2013analysis,sharma2016correspondence}
and control theory \cite{brunton2016koopman}, where the linear models
can be expressed by a unified formula
\begin{equation}
\mathbb{E}\left[\mathbf{f}(\mathbf{x}_{t+\tau})\right]=\mathbf{K}^{\top}\mathbb{E}[\mathbf{f}(\mathbf{x}_{t})],\label{eq:linear-model}
\end{equation}
and the expectation operator can be removed for deterministic systems.
In such models, the state variable $\mathbf{x}$ is tranformed into
a feature space by the transformation $\mathbf{f}(\mathbf{x})=\left(f_{1}(\mathbf{x}),\ldots,f_{m}(\mathbf{x})\right)^{\top}$,
and the dynamics with lag time $\tau$ is characterized by a linear
time-invariant system in the feature space. Then, all the dynamical
properties of the system can be quantitatively analyzed after estimating
the transition matrix $\mathbf{K}$ from data via linear regression.
A special case of the linear models is Markov state models \cite{schutte1999direct,prinz2011markov}
for conformational dynamics, which is equivalent to the well-known
Ulam's method \cite{goswami2018constrained,junge2009discretization}.
In a Markov state model, the feature mapping $\mathbf{f}$ consists
of indicator functions of subsets of state and $\mathbf{K}=[K_{ij}]$
represents the transition probability from subset $i$ to subset $j$.
Besides Markov state models and the Ulam's method, a large number
of similar modeling methods, e.g., dynamic mode decomposition \cite{schmid2010dynamic,chen2012variants,tu2014on,Kutz2016Dynamic},
time-lagged independent component analysis (TICA) \cite{molgedey1994separation,perezhernandez2013identification,schwantes2013improvements},
extended dynamic mode decomposition (EDMD) \cite{williams2015a,kevrekidis2016kernel,klus2016on},
Markov transition models \cite{wu2015gaussian}, variational approach
of conformation dynamics (VAC) \cite{noe2013a,nuske2014variational,nuske2016variational},
variational Koopman models \cite{wu2017variational} and their variants
based on kernel ebmeddings \cite{kawahara2016dynamic,kevrekidis2016kernel}
and tensors \cite{junge2009discretization,nuske2016variational},
are proposed by using different feature mappings.

From the perspective of operator theory, all the models in the form
of (\ref{eq:linear-model}) can be interpreted as algebraic representations
of transfer operators of systems, including Frobenius-Perron (FP)
operators and Koopman operators, and some of them are universal approximators
for nonlinear dynamical systems under the assumption that the dimension
of feature space is large enough \cite{korda2018convergence} or the
infinite-dimensional kernel mappings are utilized \cite{song2013kernel}.
However, due to the limitation of computational cost and requirements
of dynamical analysis, the low-dimensional approximation of transfer
operators is still a critical and challenging problem in applications
\cite{klus2018data}.

One common way to solve this problem is to identify the dominant dynamical
structures, e.g., metastable states \cite{deuflhard2005robust,roblitz2013fuzzy},
cycles \cite{conrad2016finding} and coherent sets \cite{fackeldey2019metastable},
and achieve the corresponding low-dimensional representations via
spectral clustering. But this strategy assumes that an accurate high-dimensional
model is known a priori, which is often violated especially for large-scale
systems.

Another way for deterministic systems is to seek the feature mapping
$\mathbf{f}$ by minimizing the regression error of (\ref{eq:linear-model})
under the constraint that the state variable can also be accurately
reconstructed from $\mathbf{f}$ \cite{Li_2017,lusch2018deep}. Notice
that the constraint is necessary, otherwise a trivial but uninformative
model with $\mathbf{f}(\mathbf{x})\equiv1$ and $\mathbf{K}=1$ could
be found. Some similar methods are developed for stochastic systems
by considering (\ref{eq:linear-model}) as a conditional generative
model, where the parameters of $\mathbf{f}$ can be trained based
on the likelihood or the other statistical criteria \cite{wu2018deep,mardt2019deep}.
However, these methods are applicable only if $\mathbf{f}$ are non-negative
functions and usually involves the intractable probability density
estimation.

In recent years, the variational approach has led to great progress
for low-dimensional dynamical modeling, which was first proposed for
time-reversible processes \cite{noe2013a,nuske2014variational,mcgibbon2015variational,wu2017variational}
and extended to non-reversible processes in \cite{wu2020variational}.
In contrast with the other methods, this approach provides a general
and unified framework for data-driven model choice, reduction and
optimization of dynamical systems based on the presented variational
scores related to approximation errors of transfer operators. It can
be easily integrated with deep learning to effectively analyze high-dimensional
time series in an end-to-end manner \cite{mardt2018vampnets,chen2019nonlinear}.
The existing variational principle based methods suffer from two drawbacks:
First, it is necessary to assume that the transfer operator is Hilbert-Schimdt
(HS) or compact as an operator between two weighted $\mathcal{L}^{2}$
spaces so that the maximum values of variational scores exist. But
there is no easy way to test the assumption especially when we do
not have strong prior knowledge regarding the system. Specifically,
it can be proved that the assumption does not hold for most deterministic
systems. Second, even for stochastic systems which satisfies the assumption,
the common variational scores are possibly sensitive to small modeling
variations, which could affect the effectiveness of the variational
approach.

In this work, we introduce a kernel embedding based variational approach
for dynamical systems (KVAD) using the theory of kernel embedding
of functions \cite{smola2007hilbert,song2009hilbert,sriperumbudur2011universality,song2013kernel},
where the modeling error is measured by using the distance between
kernel embeddings of transition densities. The kernel based variational
score in KVAD provides a robust and smooth quantification of differences
between transfer operators, and is proved to be bounded for general
dynamical systems, including deterministic and stochastic systems.
Hence, it can effectively overcome the difficulties of existing variational
methods, and expands significantly the range of applications. Like
the previous variational scores, the kernel based score can also be
consistently estimated from trajectory data without solving any intermediate
statistical problem. Therefore, we develop a data-driven KVAD algorithm
by considering $\mathbf{f}$ as a linear superposition of a given
set of basis functions. Furthermore, we establish a relationship between
KVAD, diffusion maps\cite{coifman2006diffusion} and the singular
components of transfer operators. Finally, the effectiveness the proposed
algorithm is demonstrated by numerical experiments.

\section{Problem formulation and preliminaries}

For a Markovian dynamical system in the state space $\mathbb{M}\subset\mathbb{R}^{D}$
, its dynamics can be completely characterized by the transition density
\begin{equation}
p_{\tau}(\mathbf{x},\mathbf{y})\triangleq\mathbb{P}\left(\mathbf{x}_{t+\tau}=\mathbf{y}|\mathbf{x}_{t}=\mathbf{x}\right)
\end{equation}
and the time evolution of the system state distribution can be formulated
as
\begin{equation}
p_{t+\tau}(\mathbf{y})=(\mathcal{P}_{\tau}p_{t})(\mathbf{y})\triangleq\int p_{t}(\mathbf{x})p_{\tau}(\mathbf{x},\mathbf{y})\mathrm{d}\mathbf{x},\label{eq:PF}
\end{equation}
Here $\mathbf{x}_{t}$ denotes the state of the system at time $t$
and $p_{t}$ is the probability density of $\mathbf{x}_{t}$. The
transfer operator $\mathcal{P}_{\tau}$ is called the \emph{Perron-Frobenius
(PF) operator}\footnote{Another commonly used transfer operator for Markovian dynamics is
Koopman operator \cite{williams2015data}, which describes the evolution
of observables instead of probability densities, and is the dual of
the PF operator. In this paper, we focus only on the PF operator for
convenience of analysis.}, which is a linear but usually infinite-dimensional operator. Notice
that the deterministic dynamics in the form of $\mathbf{x}_{t+\tau}=\Theta_{\tau}(\mathbf{x}_{t})$
is a specific case of the Markovian dynamics, where $p_{\tau}(\mathbf{x},\cdot)=\delta_{\Theta_{\tau}(\mathbf{x})}(\cdot)$
is a Dirac function centered at $\Theta_{\tau}(\mathbf{x})$, and
the corresponding PF operator is given by
\[
\int_{\mathbb{A}}(\mathcal{P}_{\tau}p_{t})(\mathbf{y})\mathrm{d}\mathbf{y}=\int_{\mathbf{x}\in\Theta_{\tau}^{-1}(\mathbb{A})}p_{t}(\mathbf{x})\mathrm{d}\mathbf{x}.
\]

By further assuming that the conditional distribution of $\mathbf{x}_{t+\tau}$
for given $\mathbf{x}_{t}=\mathbf{x}$ can always be represented by
a linear combination of $m$ density basis functions $\mathbf{q}=\left(q_{1},\ldots,q_{m}\right)^{\top}:\mathbb{M}\to\mathbb{R}^{m}$,
we obtain a finite-dimensional approximation of the transition density:
\begin{equation}
\hat{p}_{\tau}(\mathbf{x},\mathbf{y})=\mathbf{f}(\mathbf{x})^{\top}\mathbf{q}(\mathbf{y}).\label{eq:linear-density}
\end{equation}
The feature mapping $\mathbf{f}(\mathbf{x})=\left(f_{1}(\mathbf{x}),\ldots,f_{m}(\mathbf{x})\right)^{\top}$
are real-valued observables of the state $\mathbf{x}_{t}=\mathbf{x}$,
and provide a sufficient statistic for predicting the future states.
Based on this approximation, the time evolution equation (\ref{eq:PF})
of the state distribution can then be transformed into a linear evolution
model of the feature functions $\mathbf{f}$ in the form of (\ref{eq:linear-model})
with the transition matrix

\begin{equation}
\mathbf{K}=\int\mathbf{q}(\mathbf{y})\mathbf{f}(\mathbf{y})^{\top}\mathrm{d}\mathbf{y},\label{eq:K}
\end{equation}
and many dynamical properties of the Markov system, including spectral
components, coherent sets and the stationary distribution, can be
efficiently from the linear model.

It is shown in \cite{korda2018convergence} that Eq.~(\ref{eq:linear-density})
provides a universal approximator of Markovian dynamics if the set
of basis function is rich enough. But in this paper, we focus on a
more practically problem: \emph{Given a small $m$, find $\mathbf{f}$
and $\mathbf{q}$ with $\mathrm{dim}(\mathbf{f})=\mathrm{dim}(\mathbf{q})=m$
such that the modeling error of (\ref{eq:linear-density}) is minimized}.

\subsection{Variational principle for Perron-Frobenius operators\label{subsec:Variational-Principle}}

We now briefly introduce the variational principle for evaluating
the approximation quality of linear models (\ref{eq:linear-model}).
The detailed analysis and derivations can be found in \cite{wu2020variational}.

For simplicity of notation, we assume that the available trajectory
data are organized as
\[
\mathbf{X}=(\mathbf{x}_{1},\ldots,\mathbf{x}_{N})^{\top},\quad\mathbf{Y}=(\mathbf{y}_{1},\ldots,\mathbf{y}_{N})^{\top},
\]
where $\{(\mathbf{x}_{n},\mathbf{y}_{n})\}_{n=1}^{N}$ are set of
all transition pairs occurring in the given trajectories, and we denote
the limits of empirical distributions of $\mathbf{X},\mathbf{Y}$
by $\rho_{0}$ and $\rho_{1}$.

Due to the above analysis, the approximation quality of (\ref{eq:linear-density})
can be evaluated by the difference between the PF operator $\mathcal{\hat{P}}_{\tau}$
deduced from $\hat{p}_{\tau}(\mathbf{x},\mathbf{y})$ and the actual
one. In the variational principle proposed by \cite{wu2020variational},
$\mathcal{P}_{\tau}$ is considered as a mapping from $\mathcal{L}_{\rho_{0}^{-1}}^{2}=\{q|\left\Vert q\right\Vert _{\rho_{0}^{-1}}^{2}=\left\langle q,q\right\rangle _{\rho_{0}^{-1}}<\infty\}$
to $\mathcal{L}_{\rho_{1}^{-1}}^{2}=\{q|\left\Vert q\right\Vert _{\rho_{1}^{-1}}^{2}=\left\langle q,q\right\rangle _{\rho_{1}^{-1}}<\infty\}$
with inner products
\[
\left\langle q,q'\right\rangle _{\rho_{0}^{-1}}\triangleq\int q(\mathbf{x})q'(\mathbf{x})\rho_{0}(\mathbf{x})^{-1}\mathrm{d}\mathbf{x},\quad\left\langle q,q'\right\rangle _{\rho_{1}^{-1}}\triangleq\int q(\mathbf{x})q'(\mathbf{x})\rho_{1}(\mathbf{x})^{-1}\mathrm{d}\mathbf{x}.
\]
From this insight, the Hilbert-Schmidt (HS) norm of the modeling error
can be expressed as a weighted mean square error of conditional distributions
\begin{equation}
\left\Vert \mathcal{\hat{P}}_{\tau}-\mathcal{P}_{\tau}\right\Vert _{\mathrm{HS}}^{2}=\int\rho_{0}(\mathbf{x})\left\Vert \hat{p}_{\tau}(\mathbf{x},\cdot)-p_{\tau}(\mathbf{x},\cdot)\right\Vert _{\rho_{1}^{-1}}^{2}\mathrm{d}\mathbf{x},\label{eq:HS}
\end{equation}
and has the decomposition
\[
\left\Vert \mathcal{\hat{P}}_{\tau}-\mathcal{P}_{\tau}\right\Vert _{\mathrm{HS}}^{2}=-\mathcal{R}\left[\mathbf{f},\mathbf{q}\right]+\left\Vert \mathcal{P}_{\tau}\right\Vert _{\mathrm{HS}}^{2}
\]
with
\[
\mathcal{R}\left[\mathbf{f},\mathbf{q}\right]=\mathrm{tr}\left(2\mathbb{E}_{n}\left[\mathbf{f}(\mathbf{x}_{n})\mathbf{g}(\mathbf{y}_{n})^{\top}\right]-\mathbb{E}_{n}\left[\mathbf{f}(\mathbf{x}_{n})\mathbf{f}(\mathbf{x}_{n})^{\top}\right]\mathbb{E}_{n}\left[\mathbf{g}(\mathbf{y}_{n})\mathbf{g}(\mathbf{y}_{n})^{\top}\right]\right)
\]
for $\mathbf{q}(\mathbf{y})=\mathbf{g}(\mathbf{y})\rho_{1}(\mathbf{y})$.
Here $\mathbb{E}_{n}[\cdot]$ denotes the mean value over all transition
pairs $\{(\mathbf{x}_{n},\mathbf{y}_{n})\}_{n=1}^{N}$ as $N\to\infty$.
Because $\left\Vert \mathcal{P}_{\tau}\right\Vert _{\mathrm{HS}}^{2}$
is a constant independent of modeling and $\mathcal{R}$ can be easily
estimated from data via empirical averaging, we can learn parametric
models of $\mathbf{f}(\mathbf{x})$ and $\mathbf{g}(\mathbf{y})$
by maximizing $\mathcal{R}$ as a variational score, which yields
the variational approach for Markov processes (VAMP) \cite{wu2020variational}.

It can be seen that the variational principle is developed under the
assumption that $\mathcal{P}_{\tau}:\mathcal{L}_{\rho_{0}^{-1}}^{2}\to\mathcal{L}_{\rho_{1}^{-1}}^{2}$
is an HS operator.\footnote{This assumption can be relaxed to compactness of $\mathcal{P}_{\tau}$
for some variants of the variational principle, but the relaxed assumption
is not satisfied by deterministic systems either (see Proposition
\ref{prop:deterministic}).} However, in many practical applications, it is difficult to justify
the assumption for unknown transition densities. Particularly, for
deterministic systems, this assumption does not hold and the maximization
of $\mathcal{R}$ could lead to unreasonable models.
\begin{prop}
\label{prop:deterministic}For a deterministic system $\mathbf{x}_{t+\tau}=\Theta_{\tau}(\mathbf{x}_{t})$,
if $\mathcal{L}_{\rho_{1}^{-1}}^{2}$ is an infinite-dimensional Hilbert
space,
\begin{enumerate}
\item $\mathcal{P}_{\tau}$ is not a compact operator from $\mathcal{L}_{\rho_{0}^{-1}}^{2}$
to $\mathcal{L}_{\rho_{1}^{-1}}^{2}$ and hence not an HS operator,
\item $\mathcal{R}\left[\mathbf{f},\mathbf{q}\right]$ can be maximized
by an arbitrary density basis $\mathbf{q}=(g_{1}\cdot\rho_{1},\ldots,g_{m}\cdot\rho_{1})^{\top}$
with $\mathbb{E}_{n}\left[\mathbf{g}(\mathbf{y}_{n})\mathbf{g}(\mathbf{y}_{n})^{\top}\right]=\mathbf{I}$
and $f_{i}(\mathbf{x})=g_{i}(\Theta_{\tau}(\mathbf{x}))$.
\end{enumerate}
\end{prop}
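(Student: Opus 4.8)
The plan is to prove the two claims separately, with Part 1 reducing to a standard fact about composition (Koopman-type) operators and Part 2 being a direct computation with the variational score $\mathcal{R}$.

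For Part 1, I would work with the explicit form of the PF operator for a deterministic map. For $p \in \mathcal{L}^2_{\rho_0^{-1}}$, the action $\mathcal{P}_\tau p$ is the pushforward of the measure $p\,\mathrm{d}\mathbf{x}$ under $\Theta_\tau$; concretely, writing $p = \rho_0 \cdot h$ one gets that $\mathcal{P}_\tau p$ corresponds, after dividing by $\rho_1$, to the observable $h \circ \Theta_\tau^{-1}$ (pushed forward). The key point is that, up to the fixed density reweightings $\rho_0, \rho_1$, the operator $\mathcal{P}_\tau : \mathcal{L}^2_{\rho_0^{-1}} \to \mathcal{L}^2_{\rho_1^{-1}}$ is unitarily equivalent to the composition operator $h \mapsto h \circ \Theta_\tau$ acting on standard $\mathcal{L}^2$ spaces (with $\rho_0$ the pushforward of $\rho_1$ under $\Theta_\tau$, i.e. the invariant-type relation making the change of variables isometric). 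A composition operator with a measure-preserving (or measure-class-preserving with bounded Radon--Nikodym derivative) symbol is an isometry, or at least bounded below, onto its range. I would then invoke the elementary fact that a bounded-below operator on an infinite-dimensional Hilbert space cannot be compact: if it were, the image of the unit ball would be precompact, yet an orthonormal sequence $\{e_k\}$ maps to a sequence staying a fixed distance apart (since $\|\mathcal{P}_\tau e_k - \mathcal{P}_\tau e_j\| \gtrsim \|e_k - e_j\| = \sqrt 2$), contradicting precompactness. Since compactness fails, the stronger HS property fails a fortiori, which also dismisses the relaxed assumption mentioned in the footnote. The main obstacle here is handling the density reweighting cleanly: one must verify that the Radon--Nikodym derivatives relating $\rho_0$, $\rho_1$ and the pushforward measure are bounded (or at least bounded away from $0$ and $\infty$ on the relevant support) so that the isometry/bounded-below reduction is legitimate; I would either assume this regularity or phrase the statement so that it is built into "infinite-dimensional."

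For Part 2, recall that $\|\hat{\mathcal{P}}_\tau - \mathcal{P}_\tau\|_{\mathrm{HS}}^2 = -\mathcal{R}[\mathbf{f},\mathbf{q}] + \|\mathcal{P}_\tau\|_{\mathrm{HS}}^2$, and that by Part 1 the last term is $+\infty$, so the decomposition is only formal; nonetheless $\mathcal{R}$ itself is a perfectly well-defined finite quantity given by the displayed trace formula, and one can still ask for its maximizers. I would substitute the proposed choice directly: with $\mathbb{E}_n[\mathbf{g}(\mathbf{y}_n)\mathbf{g}(\mathbf{y}_n)^\top] = \mathbf{I}$ the score collapses to $\mathcal{R} = \mathrm{tr}\big(2\,\mathbb{E}_n[\mathbf{f}(\mathbf{x}_n)\mathbf{g}(\mathbf{y}_n)^\top] - \mathbb{E}_n[\mathbf{f}(\mathbf{x}_n)\mathbf{f}(\mathbf{x}_n)^\top]\big)$. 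For a deterministic system $\mathbf{y}_n = \Theta_\tau(\mathbf{x}_n)$, so choosing $f_i = g_i \circ \Theta_\tau$ gives $\mathbf{f}(\mathbf{x}_n) = \mathbf{g}(\mathbf{y}_n)$, whence $\mathbb{E}_n[\mathbf{f}\mathbf{g}^\top] = \mathbb{E}_n[\mathbf{f}\mathbf{f}^\top] = \mathbb{E}_n[\mathbf{g}\mathbf{g}^\top] = \mathbf{I}$ and therefore $\mathcal{R} = \mathrm{tr}(2\mathbf{I} - \mathbf{I}) = m$. It then remains to show $m$ is in fact the supremum of $\mathcal{R}$ over all admissible $\mathbf{f},\mathbf{q}$. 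For this I would use the general bound $\mathcal{R}[\mathbf{f},\mathbf{q}] \le \sum_{i} \sigma_i^2$ where $\sigma_i$ are the singular values of $\mathcal{P}_\tau$ (this is the finite-rank truncation inequality underlying VAMP); for a deterministic system each $\sigma_i = 1$ (the composition operator has all singular values equal to $1$ by the isometry property), so $\mathcal{R} \le m$, and the value $m$ is attained by the construction above. The main subtlety in Part 2 is justifying the upper bound $\mathcal{R} \le m$ without circularly invoking the HS decomposition that just failed; I would instead derive it by a direct Cauchy--Schwarz / trace argument on the formula for $\mathcal{R}$, bounding $\mathrm{tr}(2\mathbf{C}_{01}\mathbf{C}_{01}^\top{}^{1/2}\cdots)$-type expressions, or equivalently by noting that $\mathcal{R}[\mathbf{f},\mathbf{q}]$ equals the squared HS norm of the rank-$m$ projection of $\mathcal{P}_\tau$ and that every such projection of an isometry has squared HS norm exactly $m$. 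This simultaneously explains the pathology: the maximizer is wildly non-unique (any orthonormal $\mathbf{g}$ works), so $\mathcal{R}$ carries no information about the dynamics.
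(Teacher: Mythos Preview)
Your proposal is essentially correct and tracks the paper's approach closely. For Part~1 the paper simply defers to an external reference, so your isometry/bounded-below argument is a reasonable self-contained substitute and is the standard way to see non-compactness of composition operators on infinite-dimensional $L^2$ spaces. For Part~2 your attainment computation ($\mathcal{R}=m$ under the stated choice of $\mathbf{f},\mathbf{g}$) is identical to the paper's. For the upper bound $\mathcal{R}\le m$ you correctly flag that the singular-value route is circular here and that a direct Cauchy--Schwarz/trace argument is what is actually needed; the paper carries this out explicitly by orthogonally diagonalizing $\mathbb{E}_n[\mathbf{f}\mathbf{f}^\top]=\mathbf{Q}\mathbf{D}\mathbf{Q}^\top$, passing to $\mathbf{f}'=\mathbf{Q}^\top\mathbf{f}$ and $\mathbf{g}'=\mathbf{Q}^\top\mathbf{g}$ so that the trace decouples into scalar terms $2\,\mathbb{E}_n[f_i'g_i']-\mathbb{E}_n[(f_i')^2]\,\mathbb{E}_n[(g_i')^2]$, and then bounding each term by $1$ via Cauchy--Schwarz together with the elementary inequality $2a-a^2\le 1$. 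That concrete decoupling step is the only piece your sketch leaves implicit.
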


\begin{proof}
See Appendix \ref{sec:Proof-of-Proposition-deterministic}.
\end{proof}

\subsection{Kernel embedding of functions}

Moving away from dynamical systems for a moment, here we introduce
the theory of kernel embedding of functions \cite{song2009hilbert,song2013kernel},
which will be utilized to address the difficulty of VAMP in Section
\ref{sec:Theory}.

A kernel function $\kappa:\mathbb{M}\times\mathbb{M}\to\mathbb{R}$
is a symmetric and positive definite function, which implicitly defines
a kernel mapping $\varphi$ from $\mathbb{M}$ to a reproducing kernel
Hilbert space $\mathbb{H}$, and the inner product of $\mathbb{H}$
satisfies the reproducing property
\[
\left\langle \varphi(\mathbf{x}),\varphi(\mathbf{y})\right\rangle _{\mathbb{H}}=\kappa(\mathbf{x},\mathbf{y}).
\]
By using the kernel mapping, we can embed a function $q:\mathbb{M}\to\mathbb{R}$
in the Hilbert space $\mathbb{H}$ as
\[
\mathcal{E}q=\int\varphi(\mathbf{x})q(\mathbf{x})\mathrm{d}\mathbf{x}\in\mathbb{H}.
\]
Here $\mathcal{E}$ is an injective mapping for $q\in\mathcal{L}^{1}(\mathbb{M})$
if $\kappa$ is a universal kernel \cite{sriperumbudur2011universality},
and we can then measure the similarity between functions $q$ and
$q'$ by the distance between $\mathcal{E}q$ and $\mathcal{E}q'$:
\begin{eqnarray*}
\left\Vert q-q'\right\Vert _{\mathcal{E}}^{2} & = & \left\langle \mathcal{E}\left(q-q'\right),\mathcal{E}\left(q-q'\right)\right\rangle _{\mathbb{H}}\\
 & = &\iint \kappa(\mathbf{x},\mathbf{y})\left(q(\mathbf{x})-q'(\mathbf{x})\right)\left(q(\mathbf{y})-q'(\mathbf{y})\right)\mathrm{d}\mathbf{x}\mathrm{d}\mathbf{y},
\end{eqnarray*}
where $\left\Vert q\right\Vert _{\mathcal{E}}^{2}\triangleq\left\langle q,q\right\rangle _{\mathcal{E}}$
and $\left\langle q,q'\right\rangle _{\mathcal{E}}\triangleq\left\langle \mathcal{E}q,\mathcal{E}q'\right\rangle _{\mathbb{H}}$.
The most commonly used universal kernel for $\mathbb{M}\subset\mathbb{R}^{D}$
is the Gaussian kernel $\kappa(\mathbf{x},\mathbf{y})=\exp\left(-\left\Vert \mathbf{x}-\mathbf{y}\right\Vert ^{2}/\sigma^{2}\right)$,
where $\sigma$ denotes the bandwidth of the kernel.

In the specific case where both $q$ and $q'$ are probability density
functions, $\left\Vert q-q'\right\Vert _{\mathcal{E}}$ is called
the maximum mean discrepancy (MMD) and can be estimated from samples
of $q,q'$ \cite{song2013kernel}.

\section{Theory\label{sec:Theory}}

In this section, we develop a new variational principle for Markovian
dynamics based on the kernel embedding of trainstion densities.

Assuming that $\kappa:\mathbb{M}\times\mathbb{M}\to\mathbb{R}$ is
a universal kernel and bounded by $B$, $\mathcal{E}p_{\tau}(\mathbf{x},\cdot)$
is also bounded in $\mathbb{H}$ with
\begin{eqnarray*}
\left\Vert p_{\tau}(\mathbf{x},\cdot)\right\Vert _{\mathcal{E}}^{2} & =\iint\kappa(\mathbf{y},\mathbf{y}')p_{\tau}(\mathbf{x},\mathbf{y})p_{\tau}(\mathbf{x},\mathbf{y}')\mathrm{d}\mathbf{y}\mathrm{d}\mathbf{y}'\\
 & =\mathbb{E}_{\mathbf{y},\mathbf{y}'\sim p_{\tau}(\mathbf{x},\cdot)}\left[\kappa(\mathbf{y},\mathbf{y}')\right]\le B.
\end{eqnarray*}
Motivated by this conclusion, we propose a new measure for approximation
errors of PF operators
\begin{equation}
\int\rho_{0}(\mathbf{x})\left\Vert \hat{p}_{\tau}(\mathbf{x},\cdot)-p_{\tau}(\mathbf{x},\cdot)\right\Vert _{\mathcal{E}}^{2}\mathrm{d}\mathbf{x}\label{eq:HS-kernel}
\end{equation}
by replacing the $\mathcal{L}_{\rho_{1}^{-1}}^{2}$ norm with $\left\Vert \cdot\right\Vert _{\mathcal{E}}$
in (\ref{eq:HS}), which is finite for both deterministic and stochastic
systems if $\left\Vert \hat{p}_{\tau}(\mathbf{x},\cdot)\right\Vert _{\mathcal{E}}<\infty$.
In contrast with Eq.~(\ref{eq:HS}), the new measure provides a more
general way to quantify modeling errors of dynamics. Furthermore,
from an application point of view, Eq.~(\ref{eq:HS-kernel}) provides
a more smooth and effective representation of modeling errors of the
conditional distributions.
\begin{example}
\label{exa:distance}Let us consider a one-dimensional system with
$\mathbb{M}=[-5,5]$ and $\rho_{1}(\mathbf{x})=0.1\cdot1_{\mathbf{x}\in\mathbb{M}}$.
Suppose that for a given $\mathbf{x}$, $p_{\tau}(\mathbf{x},\mathbf{y})$
and $\hat{p}_{\tau}(\mathbf{x},\mathbf{y})$ are separately uniform
distributions within $[-0.1,0.1]$ and $[c-0.1,c+0.1]$ as shown in
Fig.~\ref{fig:distance}A, where $c$ is the model parameter. In
VAMP, the approximation error between the two conditional distributions
are calculated as $\left\Vert \hat{p}_{\tau}(\mathbf{x},\cdot)-p_{\tau}(\mathbf{x},\cdot)\right\Vert _{\rho_{1}^{-1}}^{2}$,
and it can be observed from Fig.~\ref{fig:distance}B that this quantity
is a constant independent of $c$ except in a small range $c\in[-0.2,0.2]$.
But kernel embedding based error $\left\Vert \hat{p}_{\tau}(\mathbf{x},\cdot)-p_{\tau}(\mathbf{x},\cdot)\right\Vert _{\mathcal{E}}^{2}$
in (\ref{eq:HS-kernel}) is a smooth function of $c$ and provides
a more reasonable metric for the evaluation of $c$.
\end{example}

\begin{figure}
\begin{centering}
\includegraphics[width=0.8\textwidth]{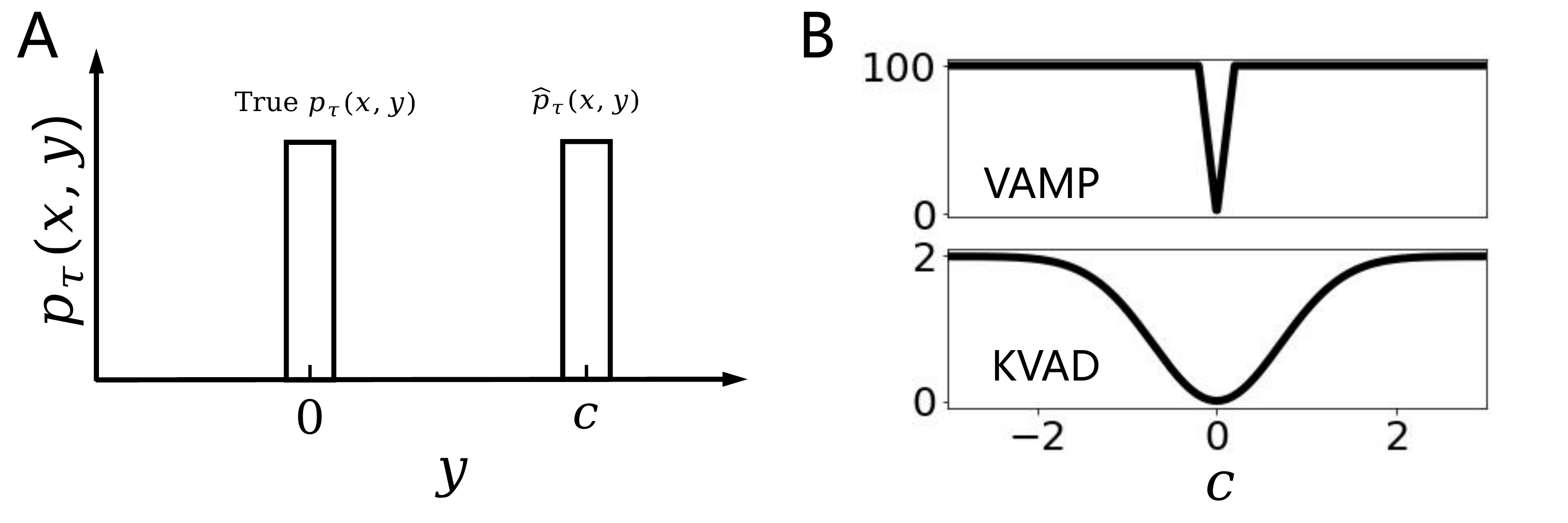}
\par\end{centering}
\caption{\label{fig:distance}Illustration of distribution distances utilized
in VAMP and KVAD. (A) The true conditional density is $p_{\tau}(x,y)=5\cdot1_{\left|y\right|\le0.1}$
for a given $x$, and the approximate density is $\hat{p}_{\tau}(x,y)=5\cdot1_{\left|y-c\right|\le0.1}$
for the same $x$. (B) The distribution distances $\left\Vert \hat{p}_{\tau}(\mathbf{x},\cdot)-p_{\tau}(\mathbf{x},\cdot)\right\Vert _{\rho_{1}^{-1}}^{2}$
defined in VAMP and $\left\Vert \hat{p}_{\tau}(\mathbf{x},\cdot)-p_{\tau}(\mathbf{x},\cdot)\right\Vert _{\mathcal{E}}^{2}$
in KVAD with different values $c$, where $\kappa$ is selected as
the Gaussian kernel with $\sigma=1$.}
\end{figure}

The following proposition shows that Eq.~(\ref{eq:HS-kernel}) can
also be derived from the HS norm of operator error by treating $\mathcal{P}_{\tau}$
as a mapping from $\mathcal{L}_{\rho_{0}^{-1}}^{2}$ to $\mathcal{L}_{\mathcal{E}}^{2}=\left\{ q|\left\Vert q\right\Vert _{\mathcal{E}}^{2}<\infty\right\} $.
\begin{prop}
\label{prop:kvamp}If $\kappa$ is a universal and bounded kernel,
\begin{enumerate}
\item $\mathcal{P}_{\tau}$ is an HS operator from $\mathcal{L}_{\rho_{0}^{-1}}^{2}$
to $\mathcal{L}_{\mathcal{E}}^{2}$, and the corresponding $\left\Vert \mathcal{\hat{P}}_{\tau}-\mathcal{P}_{\tau}\right\Vert _{\mathrm{HS}}^{2}$
is equal to (\ref{eq:HS-kernel}),
\item the HS norm of $\left(\mathcal{\hat{P}}_{\tau}-\mathcal{P}_{\tau}\right)$
satisfies
\[
\left\Vert \mathcal{\hat{P}}_{\tau}-\mathcal{P}_{\tau}\right\Vert _{\mathrm{HS}}^{2}=-\mathcal{R}_{\mathcal{E}}\left[\mathbf{f},\mathbf{q}\right]+\left\Vert \mathcal{P}_{\tau}\right\Vert _{\mathrm{HS}}^{2},
\]
with
\begin{eqnarray*}
\mathcal{R}_{\mathcal{E}}\left[\mathbf{f},\mathbf{q}\right] & =\mathrm{tr}\left(2\mathbf{C}_{fq}-\mathbf{C}_{ff}\mathbf{C}_{qq}\right)
\end{eqnarray*}
for $\hat{p}_{\tau}$ defined by (\ref{eq:linear-density}), where
\begin{eqnarray*}
\mathbf{C}_{ff}   & =& \mathbb{E}_{n}\left[\mathbf{f}(\mathbf{x}_{n})\mathbf{f}(\mathbf{x}_{n})^{\top}\right],\\
\mathbf{C}_{qq} & =& \left[\left\langle q_{i},q_{j}\right\rangle _{\mathcal{E}}\right],\\
\mathbf{C}_{fq}  & =& \left[\left\langle \mathcal{P}_{\tau}\left(f_{i}\rho_{0}\right),q_{j}\right\rangle _{\mathcal{E}}\right],
\end{eqnarray*}
are matrices of size $m\times m$, and $\mathcal{R}_{\mathcal{E}}\left[\mathbf{f},\mathbf{q}\right]$
is called the KVAD score of $\mathbf{f}$ and $\mathbf{q}$.
\end{enumerate}
\end{prop}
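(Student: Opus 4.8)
The plan is to reduce both claims to one elementary fact about integral operators valued in a Hilbert space, and then read everything off by bookkeeping. First I would introduce two isometries: $f\mapsto f\rho_0$ is an isometry from $\mathcal{L}_{\rho_0}^2$ (with $\|f\|_{\rho_0}^2=\int f^2\rho_0\,\mathrm{d}\mathbf{x}$) onto $\mathcal{L}_{\rho_0^{-1}}^2$, while $\|\mathcal{E}r\|_{\mathbb{H}}=\|r\|_{\mathcal{E}}$ holds by the very definition of $\|\cdot\|_{\mathcal{E}}$. Composing these, it suffices to study the operator $f\mapsto\mathcal{E}\bigl(\mathcal{P}_\tau(f\rho_0)\bigr)$ from $\mathcal{L}_{\rho_0}^2$ to $\mathbb{H}$, whose Hilbert--Schmidt norm equals that of $\mathcal{P}_\tau:\mathcal{L}_{\rho_0^{-1}}^2\to\mathcal{L}_{\mathcal{E}}^2$, and similarly for $\hat{\mathcal{P}}_\tau$ and for the difference. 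Using $\mathcal{E}r=\int\varphi(\mathbf{y})r(\mathbf{y})\,\mathrm{d}\mathbf{y}$, the bound $\|\varphi(\mathbf{y})\|_{\mathbb{H}}^2=\kappa(\mathbf{y},\mathbf{y})\le B$, and a Bochner--Fubini exchange (legitimate because every density occurring here lies in $\mathcal{L}^1$ by Cauchy--Schwarz), one obtains $\mathcal{E}\bigl(\mathcal{P}_\tau(f\rho_0)\bigr)=\int f(\mathbf{x})\,G(\mathbf{x})\,\rho_0(\mathbf{x})\,\mathrm{d}\mathbf{x}$ with $G(\mathbf{x})\triangleq\mathcal{E}p_\tau(\mathbf{x},\cdot)\in\mathbb{H}$. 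The key lemma is then: if $G:\mathbb{M}\to\mathbb{H}$ is measurable with $\int\|G(\mathbf{x})\|_{\mathbb{H}}^2\rho_0(\mathbf{x})\,\mathrm{d}\mathbf{x}<\infty$, then $Tf\triangleq\int f(\mathbf{x})G(\mathbf{x})\rho_0(\mathbf{x})\,\mathrm{d}\mathbf{x}$ is Hilbert--Schmidt from $\mathcal{L}_{\rho_0}^2$ to $\mathbb{H}$ with $\langle T_1,T_2\rangle_{\mathrm{HS}}=\int\langle G_1(\mathbf{x}),G_2(\mathbf{x})\rangle_{\mathbb{H}}\rho_0(\mathbf{x})\,\mathrm{d}\mathbf{x}$; this follows by computing the adjoint $(T^*h)(\mathbf{x})=\langle G(\mathbf{x}),h\rangle_{\mathbb{H}}$, summing $\|T^*e_j\|^2$ over an orthonormal basis $\{e_j\}$ of $\mathbb{H}$, exchanging sum and integral by Tonelli, and polarizing.

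For part~1 I apply the lemma with $G(\mathbf{x})=\mathcal{E}p_\tau(\mathbf{x},\cdot)$: boundedness of $\kappa$ gives $\int\|G\|_{\mathbb{H}}^2\rho_0\,\mathrm{d}\mathbf{x}=\int\rho_0(\mathbf{x})\|p_\tau(\mathbf{x},\cdot)\|_{\mathcal{E}}^2\,\mathrm{d}\mathbf{x}\le B<\infty$, so $\mathcal{P}_\tau:\mathcal{L}_{\rho_0^{-1}}^2\to\mathcal{L}_{\mathcal{E}}^2$ is Hilbert--Schmidt. Since $\hat{\mathcal{P}}_\tau$ is the PF operator attached to the density kernel $\hat{p}_\tau$, the difference $\hat{\mathcal{P}}_\tau-\mathcal{P}_\tau$ has kernel $\hat{p}_\tau-p_\tau$, and the lemma applied to $G(\mathbf{x})=\mathcal{E}\bigl(\hat{p}_\tau(\mathbf{x},\cdot)-p_\tau(\mathbf{x},\cdot)\bigr)$ yields $\|\hat{\mathcal{P}}_\tau-\mathcal{P}_\tau\|_{\mathrm{HS}}^2=\int\rho_0(\mathbf{x})\|\hat{p}_\tau(\mathbf{x},\cdot)-p_\tau(\mathbf{x},\cdot)\|_{\mathcal{E}}^2\,\mathrm{d}\mathbf{x}$, i.e.\ exactly~(\ref{eq:HS-kernel}).

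For part~2 I expand $\|\hat{\mathcal{P}}_\tau-\mathcal{P}_\tau\|_{\mathrm{HS}}^2=\|\hat{\mathcal{P}}_\tau\|_{\mathrm{HS}}^2-2\langle\hat{\mathcal{P}}_\tau,\mathcal{P}_\tau\rangle_{\mathrm{HS}}+\|\mathcal{P}_\tau\|_{\mathrm{HS}}^2$ and evaluate the first two terms with the bilinear form of the lemma. Substituting $\hat{p}_\tau(\mathbf{x},\cdot)=\sum_i f_i(\mathbf{x})q_i$ into $\|\hat{\mathcal{P}}_\tau\|_{\mathrm{HS}}^2=\int\rho_0(\mathbf{x})\|\hat{p}_\tau(\mathbf{x},\cdot)\|_{\mathcal{E}}^2\,\mathrm{d}\mathbf{x}$ gives $\sum_{i,j}\langle q_i,q_j\rangle_{\mathcal{E}}\int f_i f_j\rho_0\,\mathrm{d}\mathbf{x}$; because $\rho_0$ is the limiting empirical distribution of $\mathbf{X}$ one has $\int f_i f_j\rho_0\,\mathrm{d}\mathbf{x}=\mathbb{E}_n[f_i(\mathbf{x}_n)f_j(\mathbf{x}_n)]=(\mathbf{C}_{ff})_{ij}$, so this double sum collapses to $\mathrm{tr}(\mathbf{C}_{ff}\mathbf{C}_{qq})$ by symmetry of $\mathbf{C}_{qq}$. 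Similarly, $\langle\hat{\mathcal{P}}_\tau,\mathcal{P}_\tau\rangle_{\mathrm{HS}}=\int\rho_0(\mathbf{x})\langle\hat{p}_\tau(\mathbf{x},\cdot),p_\tau(\mathbf{x},\cdot)\rangle_{\mathcal{E}}\,\mathrm{d}\mathbf{x}=\sum_i\int f_i(\mathbf{x})\rho_0(\mathbf{x})\langle q_i,p_\tau(\mathbf{x},\cdot)\rangle_{\mathcal{E}}\,\mathrm{d}\mathbf{x}$, and pulling the $\mathbf{x}$-integral inside $\mathcal{E}$ identifies $\int f_i(\mathbf{x})\rho_0(\mathbf{x})p_\tau(\mathbf{x},\cdot)\,\mathrm{d}\mathbf{x}=\mathcal{P}_\tau(f_i\rho_0)$, so this term equals $\sum_i\langle\mathcal{P}_\tau(f_i\rho_0),q_i\rangle_{\mathcal{E}}=\mathrm{tr}(\mathbf{C}_{fq})$. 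Combining, $\|\hat{\mathcal{P}}_\tau-\mathcal{P}_\tau\|_{\mathrm{HS}}^2=-\mathrm{tr}(2\mathbf{C}_{fq}-\mathbf{C}_{ff}\mathbf{C}_{qq})+\|\mathcal{P}_\tau\|_{\mathrm{HS}}^2=-\mathcal{R}_{\mathcal{E}}[\mathbf{f},\mathbf{q}]+\|\mathcal{P}_\tau\|_{\mathrm{HS}}^2$.

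The step I expect to be the main obstacle is establishing the key lemma and its surrounding identifications rigorously: checking that $\mathcal{P}_\tau q\in\mathcal{L}^1$ for $q\in\mathcal{L}_{\rho_0^{-1}}^2$ so that $\mathcal{E}(\mathcal{P}_\tau q)$ is well defined, justifying the Bochner--Fubini and Tonelli exchanges, computing the adjoint correctly, and verifying that the two isometries genuinely carry $\mathcal{P}_\tau$ and $\hat{\mathcal{P}}_\tau$ to operators of the form in the lemma. Everything past that point — both parts~1 and~2 — is routine trace algebra.
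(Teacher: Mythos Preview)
Your proposal is correct and arrives at exactly the same identities as the paper: the integral representation $\langle\hat{\mathcal{P}}_\tau,\mathcal{P}_\tau\rangle_{\mathrm{HS}}=\int\rho_0(\mathbf{x})\langle\hat p_\tau(\mathbf{x},\cdot),p_\tau(\mathbf{x},\cdot)\rangle_{\mathcal{E}}\,\mathrm{d}\mathbf{x}$ for part~1, and the trace algebra $\|\hat{\mathcal{P}}_\tau\|_{\mathrm{HS}}^2=\mathrm{tr}(\mathbf{C}_{ff}\mathbf{C}_{qq})$, $\langle\hat{\mathcal{P}}_\tau,\mathcal{P}_\tau\rangle_{\mathrm{HS}}=\mathrm{tr}(\mathbf{C}_{fq})$ for part~2.

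The organization differs slightly from the paper's. The paper works directly in $\mathcal{L}_{\rho_0^{-1}}^2\to\mathcal{L}_{\mathcal{E}}^2$: it fixes an orthonormal basis $\{e_k\}$ of the \emph{domain}, writes out $\sum_k\langle\hat{\mathcal{P}}_\tau e_k,\mathcal{P}_\tau e_k\rangle_{\mathcal{E}}$ as a quadruple integral, and collapses the sum over $k$ via Parseval in $\mathcal{L}_{\rho_0^{-1}}^2$. You instead conjugate by the isometries $f\mapsto f\rho_0$ and $\mathcal{E}$ to land in $\mathcal{L}_{\rho_0}^2\to\mathbb{H}$, isolate a reusable lemma about operators $Tf=\int fG\rho_0$ with $G$ Hilbert-space valued, and prove that lemma by computing the adjoint and summing over an orthonormal basis of the \emph{codomain} $\mathbb{H}$. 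Both routes use Parseval once, just in different spaces; your packaging is a bit more structural and makes the measure-theoretic checkpoints (Bochner--Fubini, Tonelli) explicit, while the paper's version is a single concrete chain of equalities. Part~2 is handled identically in both.
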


\begin{proof}
See Appendix \ref{sec:Proof-of-kvamp}.
\end{proof}
As a result of this proposition, we can find optimal $\mathbf{f}$
and $\mathbf{q}$ by maximizing $\mathcal{R}_{\mathcal{E}}$.

\section{Approximation scheme\label{sec:Approximation-Scheme}}

In this section, we derive a data-driven algorithm to estimate the
optimal low-dimensional linear models based on the variational principle
stated in Proposition \ref{prop:kvamp}.

\subsection{Approximation with fixed $\mathbf{f}$\label{subsec:Approximation-with-fixed-f}}

We first propose a solution for the problem of finding the optimal
$\mathbf{q}$ given that $\mathbf{f}$ is fixed.
\begin{prop}
\label{prop:optimal-q}If $\mathbf{C}_{ff}=\mathbb{E}_{n}\left[\mathbf{f}(\mathbf{x}_{n})\mathbf{f}(\mathbf{x}_{n})^{\top}\right]$
is a full-rank matrix, the solution to $\max_{\mathbf{q}}\mathcal{R}_{\mathcal{E}}\left[\mathbf{f},\mathbf{q}\right]$
is
\begin{equation}
\mathbf{q}(\mathbf{y})=\mathbf{C}_{ff}^{-1}\int\rho_{0}(\mathbf{x})p_{\tau}(\mathbf{x},\mathbf{y})\mathbf{f}(\mathbf{x})\mathrm{d}\mathbf{x}\label{eq:optimal-q}
\end{equation}
and
\begin{eqnarray}
\mathcal{R}_{\mathcal{E}}\left[\mathbf{f}\right] & \triangleq & \max_{\mathbf{q}}\mathcal{R}_{\mathcal{E}}\left[\mathbf{f},\mathbf{q}\right]\nonumber \\
 & = & \mathrm{tr}\left(\mathbf{C}_{ff}^{-1}\mathbb{E}_{n,n'}\left[\mathbf{f}(\mathbf{x}_{n})\kappa(\mathbf{y}_{n},\mathbf{y}_{n'})\mathbf{f}(\mathbf{x}_{n'})^{\top}\right]\right),\label{eq:RE-f}
\end{eqnarray}
where $\mathbb{E}_{n,n'}\left[\cdot\right]$ denotes the expected
value with $(\mathbf{x}_{n},\mathbf{y}_{n})$ and $(\mathbf{x}_{n'},\mathbf{y}_{n'})$
independently drawn from the joint distribution of transition pairs.
\end{prop}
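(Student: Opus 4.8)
The plan is to treat $\mathcal{R}_{\mathcal{E}}[\mathbf{f},\mathbf{q}]$ as a quadratic functional of the density vector $\mathbf{q}=(q_1,\dots,q_m)^{\top}$ with $\mathbf{f}$ (hence $\mathbf{C}_{ff}$) held fixed, and to locate its maximizer by a first-variation argument. Set $h_i\triangleq\mathcal{P}_{\tau}(f_i\rho_0)=\int\rho_0(\mathbf{x})p_{\tau}(\mathbf{x},\cdot)f_i(\mathbf{x})\,\mathrm{d}\mathbf{x}$. From the definitions of $\mathbf{C}_{fq}$, $\mathbf{C}_{qq}$ and the symmetry of $\mathbf{C}_{ff}$ one gets $\mathrm{tr}(\mathbf{C}_{fq})=\sum_i\langle h_i,q_i\rangle_{\mathcal{E}}$ and $\mathrm{tr}(\mathbf{C}_{ff}\mathbf{C}_{qq})=\sum_{i,j}(\mathbf{C}_{ff})_{ij}\langle q_i,q_j\rangle_{\mathcal{E}}$, so
\[
\mathcal{R}_{\mathcal{E}}[\mathbf{f},\mathbf{q}]=2\sum_i\langle h_i,q_i\rangle_{\mathcal{E}}-\sum_{i,j}(\mathbf{C}_{ff})_{ij}\langle q_i,q_j\rangle_{\mathcal{E}}.
\]
Since $\mathbf{C}_{ff}$ is symmetric positive definite (a full-rank covariance matrix) and $\langle\cdot,\cdot\rangle_{\mathcal{E}}$ is positive semidefinite, a Cholesky factorization of $\mathbf{C}_{ff}$ shows the quadratic part is nonnegative, so $\mathcal{R}_{\mathcal{E}}[\mathbf{f},\cdot]$ is concave and every stationary point is a global maximizer. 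Replacing $q_k$ by $q_k+\varepsilon\delta$ for a test density $\delta$ and differentiating at $\varepsilon=0$ gives the stationarity condition $\left\langle h_k-\sum_j(\mathbf{C}_{ff})_{kj}q_j,\ \delta\right\rangle_{\mathcal{E}}=0$ for every admissible $\delta$.

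The key step is to upgrade this family of orthogonality relations to the pointwise identity $\mathbf{C}_{ff}\mathbf{q}=\mathbf{h}$. Because $\langle g,\delta\rangle_{\mathcal{E}}=\langle\mathcal{E}g,\mathcal{E}\delta\rangle_{\mathbb{H}}$ and $\kappa$ is universal, $\mathcal{E}$ is injective on $\mathcal{L}^{1}(\mathbb{M})$ and $\{\mathcal{E}\delta\}$ is dense in $\mathbb{H}$ (approximating $\varphi(\mathbf{x}_0)$ by embeddings of mollified point masses at $\mathbf{x}_0$), so $\langle g,\delta\rangle_{\mathcal{E}}=0$ for all $\delta$ forces $\mathcal{E}g=0$ and hence $g=0$. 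Applied with $g=h_k-\sum_j(\mathbf{C}_{ff})_{kj}q_j$ this yields $\mathbf{C}_{ff}\mathbf{q}=\mathbf{h}$, i.e.\ (\ref{eq:optimal-q}), and the same injectivity gives uniqueness of the maximizer. Equivalently, by Proposition~\ref{prop:kvamp} maximizing $\mathcal{R}_{\mathcal{E}}$ over $\mathbf{q}$ is the weighted $\mathbb{H}$-valued least-squares problem $\min_{\mathbf{q}}\int\rho_0(\mathbf{x})\big\|\mathbf{f}(\mathbf{x})^{\top}\mathbf{q}(\cdot)-p_{\tau}(\mathbf{x},\cdot)\big\|_{\mathcal{E}}^{2}\,\mathrm{d}\mathbf{x}$, whose normal equations read $\mathbf{C}_{ff}(\mathcal{E}q_1,\dots,\mathcal{E}q_m)^{\top}=(\mathcal{E}h_1,\dots,\mathcal{E}h_m)^{\top}$ (using $\mathcal{E}\mathcal{P}_{\tau}(f_i\rho_0)=\int\rho_0(\mathbf{x})f_i(\mathbf{x})\mathcal{E}p_{\tau}(\mathbf{x},\cdot)\,\mathrm{d}\mathbf{x}$), again giving the claim after cancelling $\mathcal{E}$. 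The only genuine subtlety is this cancellation, i.e.\ justifying injectivity of $\mathcal{E}$ together with the interchange of $\mathcal{E}$ and integration; I would invoke boundedness and universality of $\kappa$ (and Fubini) and not belabor the measure-theoretic details.

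For the optimal value, substitute $\mathbf{q}=\mathbf{C}_{ff}^{-1}\mathbf{h}$ and set $\mathbf{C}_{hh}\triangleq[\langle h_i,h_j\rangle_{\mathcal{E}}]$. Bilinearity of $\langle\cdot,\cdot\rangle_{\mathcal{E}}$ and symmetry of $\mathbf{C}_{ff}^{-1}$ give $\mathbf{C}_{fq}=\mathbf{C}_{hh}\mathbf{C}_{ff}^{-1}$ and $\mathbf{C}_{qq}=\mathbf{C}_{ff}^{-1}\mathbf{C}_{hh}\mathbf{C}_{ff}^{-1}$, hence
\[
\mathcal{R}_{\mathcal{E}}[\mathbf{f}]=2\,\mathrm{tr}(\mathbf{C}_{hh}\mathbf{C}_{ff}^{-1})-\mathrm{tr}(\mathbf{C}_{ff}\mathbf{C}_{ff}^{-1}\mathbf{C}_{hh}\mathbf{C}_{ff}^{-1})=\mathrm{tr}(\mathbf{C}_{ff}^{-1}\mathbf{C}_{hh}).
\]
Finally I would expand $\langle h_i,h_j\rangle_{\mathcal{E}}=\iint\kappa(\mathbf{y},\mathbf{y}')h_i(\mathbf{y})h_j(\mathbf{y}')\,\mathrm{d}\mathbf{y}\,\mathrm{d}\mathbf{y}'$, insert the definitions of $h_i,h_j$, and recognize the resulting integral against $\rho_0(\mathbf{x})p_{\tau}(\mathbf{x},\mathbf{y})\rho_0(\mathbf{x}')p_{\tau}(\mathbf{x}',\mathbf{y}')$ as the expectation over two independent transition pairs, so that $\mathbf{C}_{hh}=\mathbb{E}_{n,n'}[\mathbf{f}(\mathbf{x}_n)\kappa(\mathbf{y}_n,\mathbf{y}_{n'})\mathbf{f}(\mathbf{x}_{n'})^{\top}]$ and (\ref{eq:RE-f}) follows. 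I expect the only real obstacle to be the ``cancel $\mathcal{E}$'' step; everything after it is bookkeeping with traces and Fubini.
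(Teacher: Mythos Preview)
Your argument is correct, but it proceeds differently from the paper. The paper first reduces to the case $\mathbf{C}_{ff}=\mathbf{I}$ by the whitening substitution $\mathbf{f}'=\mathbf{C}_{ff}^{-1/2}\mathbf{f}$, $\mathbf{q}'=\mathbf{C}_{ff}^{1/2}\mathbf{q}$ (which leaves $\mathbf{f}^{\top}\mathbf{q}$ and hence the score invariant), and in that case simply completes the square:
\[
\mathcal{R}_{\mathcal{E}}[\mathbf{f},\mathbf{q}]=-\sum_i\bigl\|q_i-\mathcal{P}_{\tau}(f_i\rho_0)\bigr\|_{\mathcal{E}}^{2}+\sum_i\bigl\|\mathcal{P}_{\tau}(f_i\rho_0)\bigr\|_{\mathcal{E}}^{2},
\]
from which the maximizer and maximal value are read off immediately. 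You instead attack general $\mathbf{C}_{ff}$ head-on via the first variation, derive the normal equations $\mathbf{C}_{ff}\mathbf{q}=\mathbf{h}$, and then substitute back. Both routes ultimately lean on universality of $\kappa$ (injectivity of $\mathcal{E}$) for uniqueness, but the paper only needs the soft form ``$\|g\|_{\mathcal{E}}=0\Rightarrow g=0$'', whereas you need the slightly stronger ``$\langle g,\delta\rangle_{\mathcal{E}}=0$ for all $\delta\Rightarrow g=0$'' together with a density argument for $\{\mathcal{E}\delta\}$ in $\mathbb{H}$. The paper's completing-the-square is shorter and sidesteps your ``cancel $\mathcal{E}$'' worry entirely; your approach has the mild advantage of not introducing the auxiliary whitened variables and of making the least-squares/normal-equations structure explicit.
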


\begin{proof}
See Appendix \ref{sec:Proofs-of-optimal-q}.
\end{proof}
As $\rho_{0}(\mathbf{x})p_{\tau}(\mathbf{x},\mathbf{y})$ in Eq.~(\ref{eq:optimal-q})
is the joint distribution of transition pairs $(\mathbf{x}_{n},\mathbf{y}_{n})$,
we can get a nonparametric approximation of $\mathbf{q}$
\begin{equation}
\mathbf{q}(\mathbf{y})=\frac{1}{N}\sum_{n}\mathbf{C}_{ff}^{-1}\mathbf{f}(\mathbf{x}_{n})\delta_{\mathbf{y}_{n}}(\mathbf{y})\label{eq:optimal-q-data}
\end{equation}
by replacing the the transition pair distribution with its empirical
estimate. This result gives us a linear model (\ref{eq:linear-model})
with transition matrix
\begin{eqnarray}
\mathbf{K} & = & \frac{1}{N}\mathbf{C}_{ff}^{-1}\mathbf{f}(\mathbf{X})^{\top}\mathbf{f}(\mathbf{Y})\nonumber \\
 & = & \mathbf{f}(\mathbf{X})^{+}\mathbf{f}(\mathbf{Y})\label{eq:K-KVAD}
\end{eqnarray}
with $\mathbf{f}(\mathbf{X})=\left(\mathbf{f}(\mathbf{x}_{1}),\ldots,\mathbf{f}(\mathbf{x}_{N})\right)^{\top}\in\mathbb{R}^{N\times m}$
and $\mathbf{f}(\mathbf{X})^{+}$ denoting the Penrose-Moore pseudo-inverse
of $\mathbf{f}(\mathbf{X})$, which is equal to the least square solution
to the regression problem $\mathbf{f}(\mathbf{y}_{n})\approx\mathbf{K}^{\top}\mathbf{f}(\mathbf{x}_{n})$.

\subsection{Approximation with unknown $\mathbf{f}$\label{subsec:Approximation-with-unknown-f}}

We now consider the modeling problem with the normalization condition
\begin{equation}
\int\hat{p}_{\tau}(\mathbf{x},\mathbf{y})\mathrm{d}\mathbf{y}=\int\mathbf{f}(\mathbf{x})^{\top}\mathbf{q}(\mathbf{y})\mathrm{d}\mathbf{y}\equiv1,\label{eq:normalization}
\end{equation}
where $\mathbf{f}$ and $\mathbf{q}$ are both unknown, and we make
the Ansatz to represent $\mathbf{f}$ as linear combinations of basis
functions $\boldsymbol{\chi}=(\chi_{1},\ldots,\chi_{M})^{\top}$.
Furthermore, we assume without loss of generality that the whitening
transformation is applied to the basis functions so that
\begin{equation}
\mathbb{E}_{n}\left[\boldsymbol{\chi}(\mathbf{x}_{n})\right]=\mathbf{0},\quad\mathbb{E}_{n}\left[\boldsymbol{\chi}(\mathbf{x}_{n})\boldsymbol{\chi}(\mathbf{x}_{n})^{\top}\right]=\mathbf{I}.\label{eq:constraint-chi}
\end{equation}
(See, e.g., Appendix F in \cite{wu2020variational} for the details
of whitening transformation.)

It is proved in Appendix \ref{sec:Proof-of-constraint-f} that there
must be a solution to $\max_{\mathbf{f}}\mathcal{R}_{\mathcal{E}}\left[\mathbf{f}\right]$
under constraint (\ref{eq:normalization}) satisfying
\begin{equation}
f_{1}(\mathbf{x})\equiv1,\quad\mathbf{C}_{ff}=\mathbf{I}.\label{eq:constraint-f}
\end{equation}
Therefore, we can model $\mathbf{f}$ in the form of
\begin{equation}
\mathbf{f}(\mathbf{x})=(1,\boldsymbol{\chi}(\mathbf{x})^{\top}\mathbf{U})^{\top},\quad\mathbf{U}\in\mathbb{R}^{m\times M}.\label{eq:ansatz}
\end{equation}
Substituting this Ansatz into the KVAD score, shows that $\mathbf{U}$
can be computed as the solution to the maximization problem:
\begin{eqnarray}
\max_{\mathbf{U}} & \mathcal{R}_{\mathcal{E}}\left(\mathbf{U}\right)\nonumber \\
\mathrm{s.t.} & \mathbf{C}_{ff}=\mathbf{U}^{\top}\mathbf{U}=\mathbf{I}
\end{eqnarray}
with
\begin{equation}
\mathcal{R}_{\mathcal{E}}\left(\mathbf{U}\right)=\frac{1}{N^{2}}\mathrm{tr}\left(\mathbf{U}^{\top}\boldsymbol{\chi}(\mathbf{X})^{\top}\mathbf{G}_{yy}\boldsymbol{\chi}(\mathbf{X})\mathbf{U}\right)+\frac{1}{N^{2}}\mathbf{1}^{\top}\mathbf{G}_{yy}\mathbf{1}\label{eq:RU}
\end{equation}
being a matrix representation of $\mathcal{R}_{\mathcal{E}}\left[\mathbf{f}\right]$.
Here
\[
\mathbf{G}_{yy}=\left[\kappa\left(\mathbf{y}_{i},\mathbf{y}_{j}\right)\right]\in\mathbb{R}^{N\times N}
\]
is the Gram matrix of $\mathbf{Y}$, and $\boldsymbol{\chi}(\mathbf{X})=\left(\boldsymbol{\chi}(\mathbf{x}_{1}),\ldots,\boldsymbol{\chi}(\mathbf{x}_{N})\right)^{\top}\in\mathbb{R}^{N\times M}$.
This problem has the same form as principal component analysis problem
and can be effectively can be solved via the eigendecomposition of
matrix $\boldsymbol{\chi}(\mathbf{X})^{\top}\mathbf{G}_{yy}\boldsymbol{\chi}(\mathbf{X})$
\cite{jolliffe2016principal}. The resulting KVAD algorithm is as
follows, and it can be verified that the normalization conditions
(\ref{eq:normalization}) exactly holds for the estimated transition
density (see Appendix \ref{sec:Normalization-property}).
\begin{enumerate}
\item Select a set of basis function $\boldsymbol{\chi}=(\chi_{1},\ldots,\chi_{M})^{\top}$
with $M\gg m$.
\item Perform the whitening transformation so that (\ref{eq:constraint-chi})
holds.
\item Perform the truncated eigendecomposition
\[
\boldsymbol{\chi}(\mathbf{X})^{\top}\mathbf{G}_{yy}\boldsymbol{\chi}(\mathbf{X})\approx\mathbf{U}\mathbf{S}^{2}\mathbf{U}^{\top},
\]
where $\mathbf{S}=\mathrm{diag}(s_{1},\ldots,s_{m-1})$, $s_{1}\ge s_{2}\ge\ldots\ge s_{m-1}$
are square roots of the largest $m$ eigenvalues of $\boldsymbol{\chi}(\mathbf{X})^{\top}\mathbf{G}_{yy}\boldsymbol{\chi}(\mathbf{X})$,
and $\mathbf{U}=(\mathbf{u}_{1},\ldots,\mathbf{u}_{m-1})$ consists
of the corresponding dominant eigenvectors. This step is a bottleneck
of the algorithm due to the large size Gram matrix $\mathbf{G}_{yy}$,
and the computational cost can be reduced by Nystr\"om approximation
or random fourier features \cite{drineas2005nystrom,rahimi2008random}.
\item Calculate $\mathbf{f}$, $\mathbf{q}$ and $\mathbf{K}$ by (\ref{eq:ansatz},
\ref{eq:optimal-q-data}, \ref{eq:K-KVAD}) with $\mathbf{C}_{ff}=\mathbf{I}$.
\end{enumerate}

\subsection{Component analysis\label{subsec:Component-analysis}}

Due to the fact that $f_{1},q_{1}$ are non-trainable, the approximate
PF operator obtained by the KVAD algorithm can be decomposed as
\begin{eqnarray*}
\hat{\mathcal{P}}_{\tau}q & = & \left\langle f_{1},\rho_{0}\right\rangle _{\rho_{0}^{-1}}q_{1}+\sum_{i=2}^{m}\left\langle q,f_{i}\rho_{0}\right\rangle _{\rho_{0}^{-1}}q_{i}\\
 & = & \left\langle q,\rho_{0}\right\rangle _{\rho_{0}^{-1}}\rho_{1}+\sum_{i=2}^{m}s_{i}\left\langle q,f_{i}\rho_{0}\right\rangle _{\rho_{0}^{-1}}\left(s_{i}^{-1}q_{i}\right).
\end{eqnarray*}
It is worth pointing out that $s_{i},s_{i}^{-1}q_{i+1},f_{i+1}\rho_{0}$
obtained by KVAD algorithm are variational estimates of the $i$th
singular value, left singular function and right singular function
of the operator $\tilde{\mathcal{P}}_{\tau}$ defined by
\begin{equation}
\mathcal{P}_{\tau}q=\left\langle q,\rho_{0}\right\rangle _{\rho_{0}^{-1}}\rho_{1}+\tilde{\mathcal{P}}_{\tau}q,\label{eq:PF-mean-free}
\end{equation}
where $(\tilde{\mathcal{P}}_{\tau}\rho_{0})(\mathbf{y})\equiv0$.
Thus, the KVAD algorithm indeed performs truncated singular value
decomposition (SVD) of $\tilde{\mathcal{P}}_{\tau}$ (see Appendix
\ref{sec:svd}).

At the limit case where the all singular components of $\tilde{\mathcal{P}}_{\tau}$
are exactly estimated by KVAD, we have
\begin{eqnarray}
D_{\tau}\left(\mathbf{x},\mathbf{x}'\right)^{2} & \triangleq & \left\Vert p_{\tau}(\mathbf{x},\cdot)-p_{\tau}(\mathbf{x}',\cdot)\right\Vert _{\mathcal{E}}^{2}\nonumber \\
 & = & \sum_{i=1}^{m-1}s_{i}^{2}\left(f_{i+1}(\mathbf{x})-f_{i+1}(\mathbf{x}')\right)^{2}\label{eq:diffusion-distance}
\end{eqnarray}
for all $\mathbf{x},\mathbf{x}'\in\mathbb{M}$. The distance $D_{\tau}$
measures the dynamical similarity of two points in the state space,
and can be approximated by the Euclidean distance derived from coordinates
$(s_{1}f_{2}(\mathbf{x}),\ldots,s_{m-1}f_{m}(\mathbf{x}))^{\top}$
as shown in (\ref{eq:diffusion-distance}). Hence, KVAD provides an
ideal low-dimensional embedding of system dynamics, and can be reinterpreted
a variant of the diffusion mapping method for dynamical model reduction
\cite{coifman2006diffusion} (see Appendix \ref{sec:Proof-of-distance}).

\section{Relationship with Other Methods}

\subsection{EDMD and VAMP}

It can be seen from (\ref{eq:K-KVAD}) that the optimal linear model
obtained by KVAD is consistent with the model of EDMD \cite{williams2015data}
for given feature functions $\mathbf{f}$. However, the optimization
and the dimension reduction of the observables are not considered
in the conventional EDMD.

Both VAMP \cite{wu2020variational} and KVAD solve this problem by
variational formulations of modeling errors. As analyzed in Sections
\ref{subsec:Variational-Principle} and \ref{sec:Theory}, KVAD is
applicable to more general systems, including deterministic systems,
compared to VAMP. Moreover, VAMP needs to represent both $\mathbf{f}$
and $\mathbf{q}$ by parametric models for dynamical approximation,
whereas KVAD can obtain the optimal $\mathbf{q}$ from data without
any parametric model for given $\mathbf{f}$. Our numerical experiments
(see Section \ref{sec:Experiments}) show that KVAD can often provide
more accurate low-dimensional models than VAMP when the same Ansatz
basis functions are used.

\subsection{Conditional mean embedding, kernel EDMD and kernel CCA}

For given two random variables $\mathbf{x}$ and $\mathbf{y}$, the
conditional mean embedding proposed in \cite{song2009hilbert} characterizes
the conditional distribution of $\mathbf{y}$ for given $\mathbf{x}$
by the conditional embedding operator $\mathcal{C}_{\mathbf{y}|\mathbf{x}}$
with
\[
\mathbb{E}[\varphi(\mathbf{y})|\mathbf{x}]=\mathcal{C}_{\mathbf{y}|\mathbf{x}}\varphi(\mathbf{x}),
\]
where $\varphi$ denotes the kernel mapping and $\mathcal{C}_{\mathbf{y}|\mathbf{x}}$
can be consistently estimated from data. When applied to dynamical
data, this method has the same form as the kernel EDMD and its variants
\cite{schwantes2015modeling,kevrekidis2016kernel,kawahara2016dynamic},
and is indeed a specific case of KVAD with Ansatz functions $\boldsymbol{\chi}=\varphi$
and dimension $m=N$ (see Appendix \ref{sec:Comparison-between-KVAD-KME}).

In addition, for most kernel based dynamical modeling methods, the
dimension reduction problem is not thoroughly investigated. In \cite{klus2020eigendecompositions},
a kernel method for eigendecomposition of transfer operators (including
PF operators and Koopman operators) was developed. But as analyzed
in \cite{wu2020variational}, the dominant eigen-components may not
yield an accurate low-dimensional dynamical model. Kernel canonical
correlation analysis (CCA) \cite{lai2000kernel} can overcome this
problem as a kernelization of VAMP, but it is also applicable only
if $\mathcal{P}_{\tau}$ is a compact operator from $\mathcal{L}_{\rho_{0}^{-1}}^{2}$
to $\mathcal{L}_{\rho_{1}^{-1}}^{2}$.

Compared to the previous kernel methods, KVAD has more flexibility
in model choice, where the dimension and model class of $\mathbf{f}$
can be arbitrarily selected according to practical requirements.

\section{Numerical experiments\label{sec:Experiments}}

In what follows, we demonstrate the benefits of the KVAD method for
studies of nonlinear dynamical systems by two examples, and compare
the results from KVAD with VAMP and kernel EDMD, where the basis functions
in VAMP and kernel functions in kernel EDMD are the same as those
in KVAD. For kernel EDMD, the low-dimensional linear model is achieved
by leading eigenvalues and eigenfunctions as in \cite{klus2020eigendecompositions},
which characterizes invariant subspaces of systems.
\begin{example}
\emph{\label{exa:oscillator}Van der Pol oscillator}, which is a two-dimensional
system governed by
\begin{eqnarray*}
\mathrm{d}x_{t} & = &y_{t}\mathrm{d}t+\xi\cdot\mathrm{d}w_{x,t},\\
\mathrm{d}y_{t} & = &\left(2\left(0.2-x_{t}^{2}\right)y_{t}-x_{t}\right)\mathrm{d}t+\xi\cdot\mathrm{d}w_{y,t},
\end{eqnarray*}
where $w_{x,t}$ and $w_{y,t}$ are standard Wiener processes. The
flow field of this system for $\xi=0$ is depicted in Fig.~\ref{fig:systems}A.
We generate $N=2000$ transition pairs for modeling, where the lag
time $\tau=0.2$, $\mathbf{X}=(\mathbf{x}_{1},\ldots,\mathbf{x}_{N})^{\top}$
are randomly drawn from $[-1.5,1.5]^{2}$, and $\mathbf{Y}=(\mathbf{y}_{1},\ldots,\mathbf{y}_{N})^{\top}$
are obtained by the Euler-Maruyama scheme with step size $0.01$.
\end{example}

\begin{example}
\emph{\label{exa:Lorenz}Lorenz system} defined by
\begin{eqnarray*}
\mathrm{d}x_{t} & = &10(y_{t}-x_{t})\mathrm{d}t+\xi\cdot x_{t}\cdot\mathrm{d}w_{x,t},\\
\mathrm{d}y_{t} & = & \left(28x_{t}-y_{t}-x_{t}z_{t}\right)\mathrm{d}t+\xi\cdot y_{t}\cdot\mathrm{d}w_{y,t},\\
\mathrm{d}z_{t} & = & \left(x_{t}y_{t}-\frac{8}{3}z_{t}\right)\mathrm{d}t+\xi\cdot z_{t}\cdot\mathrm{d}w_{z,t},
\end{eqnarray*}
with $w_{x,t},w_{y,t},w_{z,t}$ being standard Wiener processes. Fig.~\ref{fig:systems}B
plots a trajectory of this system in the state space with $\xi=0$.
We sample $2000$ transition pairs from a simulation trajectory with
length $200$ and lag time $\tau=0.1$ as training data for each $\xi$,
and perform simulations by the Euler-Maruyama scheme with step size
$0.005$.
\end{example}

In both examples, the feature mapping $\mathbf{f}$ is represented
by basis functions
\[
\chi_{i}(\mathbf{x})=\exp\left(-\left(\boldsymbol{\theta}_{i}^{\top}\mathbf{x}+b_{i}\right)^{2}\right), for i=1,\ldots,500
\]
with all components of $\boldsymbol{\theta}_{i}$ and $b_{i}$ randomly
drawn from $[-1,1]$, which are widely used in shallow neural networks
\cite{huang2006extreme,dash2017handbook}. The kernel $\kappa$ is
selected as the Gaussian kernel with $\sigma=1.5$ for the oscillator
and $10$ for the Lorenz system.

Fig.~\ref{fig:systems} shows estimates of singular values of $\tilde{\mathcal{P}}_{\tau}:\mathcal{L}_{\rho_{0}^{-1}}^{2}\to\mathcal{L}_{\mathcal{E}}^{2}$
(KVAD), singular values of $\mathcal{P}_{\tau}:\mathcal{L}_{\rho_{0}^{-1}}^{2}\to\mathcal{L}_{\rho_{1}^{-1}}^{2}$
(VAMP) and absolute values of eigenvalues of $\mathcal{P}_{\tau}$
(kernel EDMD) with different noise parameters, where singular values
must be nonnegative real numbers but eigenvalues could be complex
or negative. We see that the singular values and eigenvalues given
by VAMP and kernel EDMD decay very slowly. Hence, it is difficult
to extract an accurate model from the estimation results of VAMP and
kernel EDMD for a small $m$. Especially for VAMP, a large number
of singular values are close to $1$ as analyzed in Proposition \ref{prop:deterministic}
when the systems are deterministic with $\xi=0$. In contrast, the
singular values utilized in KVAD rapidly converges to zero, which
implies one can effectively extract the essential part of dynamics
from a small number of feature mappings.

The first two singular components of $\tilde{\mathcal{P}}_{\tau}$
for $\xi=0$ obtained by KVAD are shown in Fig.~\ref{fig:singular-components}
(see Section \ref{subsec:Component-analysis}).\footnote{The $q_{i}$ is approximated by multiple delta functions and hard
to visualize.} It can be observed that $f_{1},f_{2}$ of the oscillator characterize
transitions between left-right and up-down areas separately. Those
of the Lorenz systems are related to the two attractor lobes and the
transition areas.

It is interesting to note that the singular values of $\tilde{\mathcal{P}}_{\tau}$
given by KVAD are slightly influenced by $\xi$ as illustrated in
Fig.~\ref{fig:systems}. Our numerical experience also show that
the right singular functions remain almost unchanged for different
$\xi$ (see Fig.~\ref{fig:noise-singular} in Appendix \ref{sec:Estimated-singular-components}).
This phenomenon can be partially explained by the fact that the variational
score $\mathcal{R}_{\epsilon}$ estimated by (\ref{eq:RU}) is not
sensitive to small perturbations of $\mathbf{Y}$ if the bandwidth
of the kernel is large. More thorough investigations on this phenomenon
will be performed in future.

\begin{figure}
\begin{centering}
\includegraphics[width=0.8\textwidth]{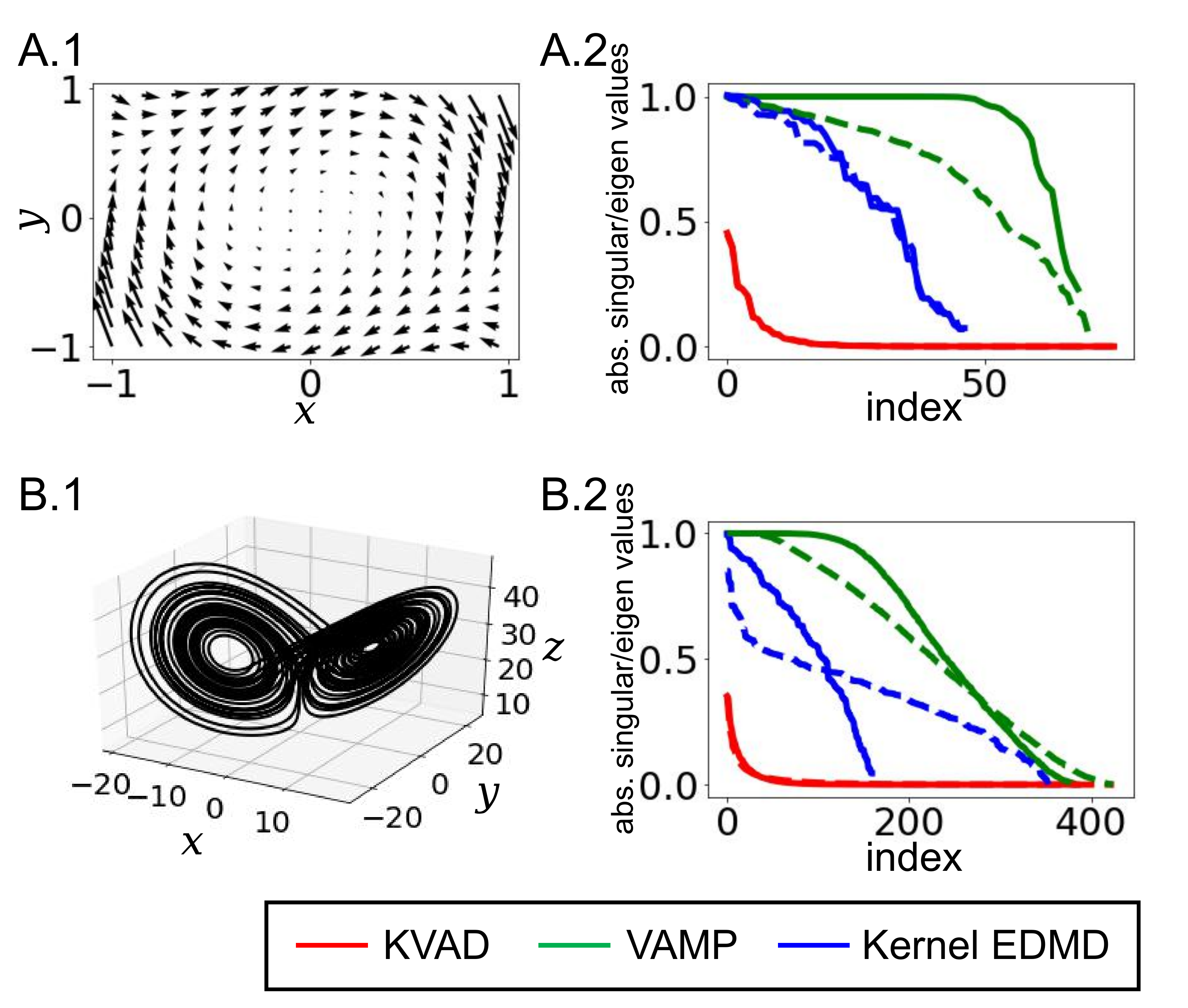}
\par\end{centering}
\caption{\label{fig:systems}(A.1) Flow map of the Van der Pol oscillator,
where the arrows represent directions of $(\mathrm{d}x_{t},\mathrm{d}y_{t})$
with $\xi=0$. (B.1) A typical trajectory of the Lorenz system with
$\xi=0$ generated by the Euler-Maruyama scheme. (A.2 and B.2) Estimated
singular values and absolute values of eigenvalues of the oscillator
and the Lorenz system. Red lines represent singular values of $\tilde{\mathcal{P}}_{\tau}$
estimated by KVAD (see (\ref{eq:PF-mean-free})), green lines singular
values of $\mathcal{P}_{\tau}$ estimated by VAMP, red lines absolute
values of eigenvalues of $\mathcal{P}_{\tau}$ estimated by kernel
EDMD, solid lines estimates with $\xi=0$, and dashed lines those
with $\xi=0.2$ (oscillator) and $0.5$ (Lorenz system). Notice the
total number of spectral components changes in different cases due
to the rank truncation in implementations of SVD and pseudo inverse.}
\end{figure}

\begin{figure}
\begin{centering}
\includegraphics[width=0.8\textwidth]{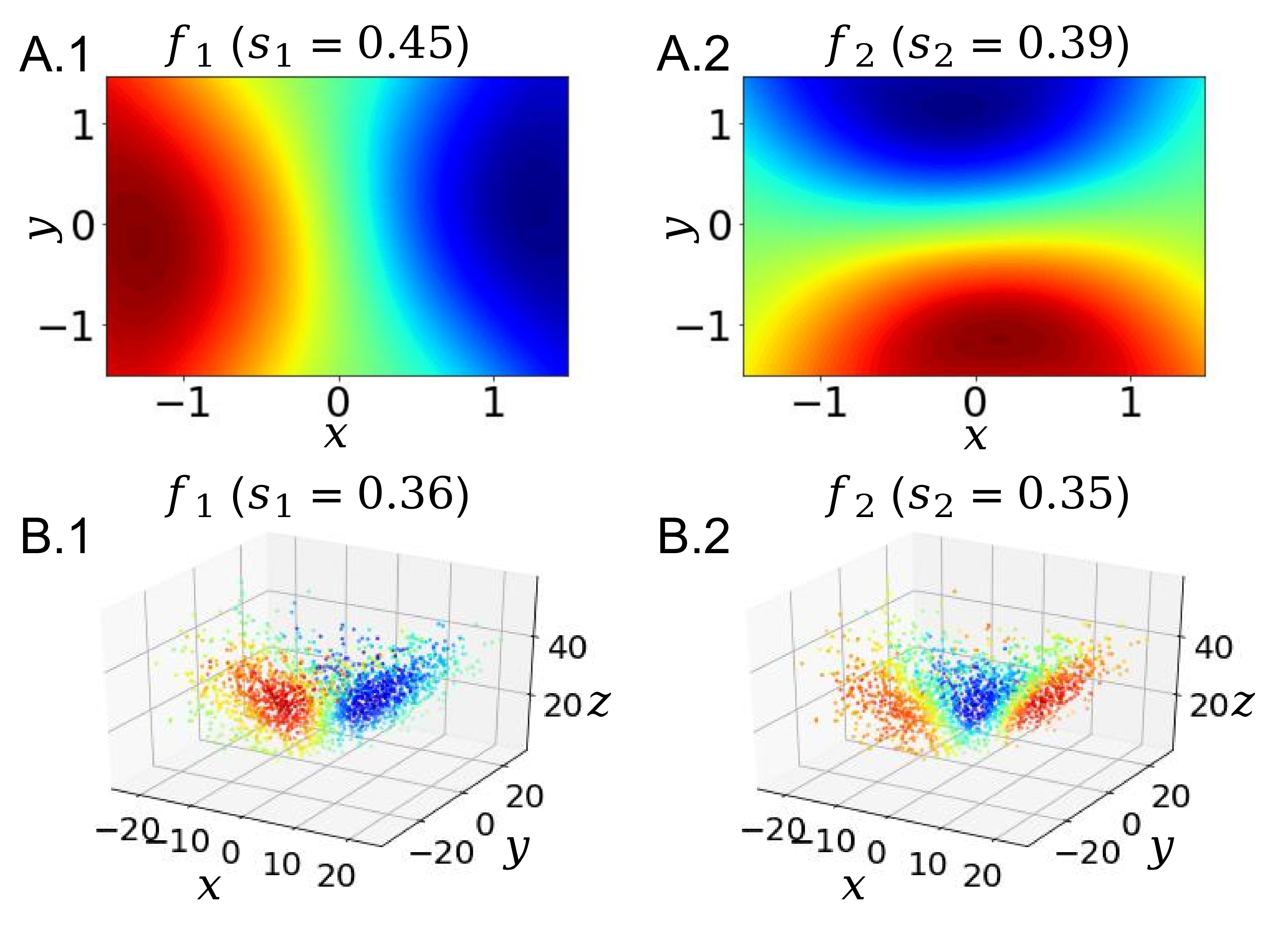}
\par\end{centering}
\caption{\label{fig:singular-components}The first two singular components
computed by KVAD, where $\xi=0$, $s_{i}$ is the estimate of the
$i$th singular value of $\tilde{\mathcal{P}}_{\tau}$ and $f_{i}\cdot\rho_{0}$
the estimate of the corresponding right singular function (see Section
\ref{subsec:Component-analysis}). (A) The oscillator. (B) The Lorenz
system.}
\end{figure}

In order to quantitively evaluate the performance of the three methods,
we define the following trajectory reconstruction error:

\[
\mathrm{error}=\sqrt{\frac{1}{L}\sum_{l=1}^{L}\left\Vert \mathbf{x}_{l\tau}-\mathbb{E}_{\mathrm{model}}[\mathbf{x}_{l\tau}|\mathbf{x}_{0}]\right\Vert },
\]
where $\mathbf{x}_{t}$ is the true trajectory data and $\mathbb{E}_{\mathrm{model}}[\mathbf{x}_{t}|\mathbf{x}_{0}]$
is the conditional mean value of $\mathbf{x}_{t}$ obtained by the
model. The average error over multiple replicate simulations is minimized
if and only if $\mathbb{E}_{\mathrm{model}}[\mathbf{x}_{l\tau}|\mathbf{x}_{0}]$
equals to the exact conditional mean value of $\mathbf{x}_{l\tau}$
for all $l$. For all the three methods,
\begin{eqnarray*}
\mathbb{E}_{\mathrm{model}}[\mathbf{x}_{l\tau}|\mathbf{x}_{0}] & =\mathbf{G}^{\top}\mathbb{E}_{\mathrm{model}}\left[\mathbf{f}(\mathbf{x}_{(l-1)\tau})|\mathbf{x}_{0}\right]\\
 & = &\left(\mathbf{K}^{l-1}\mathbf{G}\right)^{\top}\mathbf{f}(\mathbf{x}_{0}),
\end{eqnarray*}
where $\mathbf{G}$ is the least square solution to the regression
problem $\mathbf{x}_{t+\tau}\approx\mathbf{G}^{\top}\mathbf{f}(\mathbf{x}_{t})$
\cite{williams2015data}. Fig.~\ref{fig:errors} summarizes of reconstruction
errors of the two systems obtained with different choices of the model
dimension $m$ and noise parameter $\xi$, and the superiority of
our KVAD is clearly shown.

\begin{figure}
\begin{centering}
\includegraphics[width=0.8\textwidth]{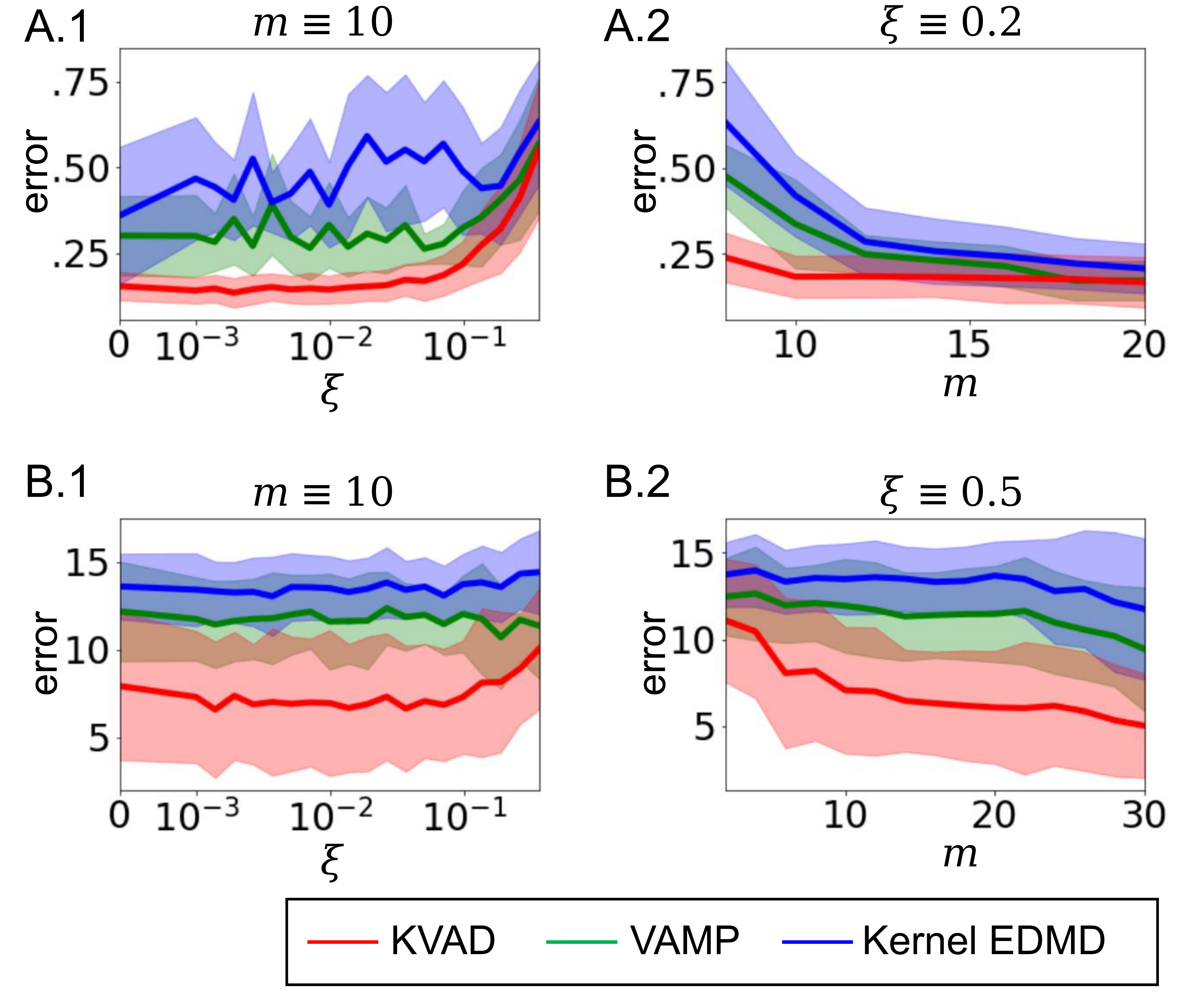}
\par\end{centering}
\caption{\label{fig:errors}(A) Reconstruction errors of the oscillator, where
$L=50$ and $\mathbf{x}_{0}$ is randomly drawn in $[-1.5,1.5]^{2}$.
(B) Reconstruction errors of the Lorenz. Here $L=8$, $\mathbf{x}_{0}$
is randomly sampled from a long simulation trajectory, and the trajectory
is independent of the training data. Error bars represent standard
deviations calculated from 100 bootstrapping replicates of simulations.}
\end{figure}

\section{Conclusion}

In this paper, we combine the kernel embedding technique with the
variational principle for transfer operators. This provides a powerful
and flexible tool for low-dimensional approximation of dynamical system,
and effectively addresses the shortcomings and limitations of the
existing variational approach. In the proposed KVAD framework, a bounded
and well defined distance measure of transfer operators is developed
based on kernel embedding of transition densities, and the corresponding
variational optimization approach can be applied to a broader range
of dynamical systems than the existing variational approaches.

Our future work includes the convergence analysis of KVAD and optimization
of kernel functions. From the algorithmic point of view, the main
remaining question is how to efficiently perform KVAD learning from
big data with deep neural networks. It will be also interesting to
apply KVAD to multi-ensemble Markov models \cite{wu2016multiensemble}
for data analysis of enhanced sampling.

\FloatBarrier

\appendix

\part*{Appendix}

For convenience of notation, we define
\[
\left\langle \mathbf{a},\mathbf{b}^{\top}\right\rangle =\left[\left\langle a_{i},b_{j}\right\rangle \right]\in\mathbb{R}^{n_{1}\times n_{2}}
\]
and
\[
\mathcal{P}_{\tau}\mathbf{a}=(\mathcal{P}_{\tau}a_{1},\ldots,\mathcal{P}_{\tau}a_{n_{1}})^{\top}
\]
for $\mathbf{a}=(a_{1},\ldots,a_{n_{1}})^{\top}$, $\mathbf{b}=(b_{1},\ldots,b_{n_{1}})^{\top}$
and an inner product $\left\langle \cdot,\cdot\right\rangle $.

\section{Proof of Proposition \ref{prop:deterministic}\label{sec:Proof-of-Proposition-deterministic}}

The proof of first conclusion is given in Appendices A.5 and B of
\cite{wu2020variational}, and we prove here the second conclusion.

We first show $\mathcal{R}\left[\mathbf{f},\mathbf{q}\right]\le m$.
Because $\mathbb{E}_{n}\left[\mathbf{f}(\mathbf{x}_{n})\mathbf{f}(\mathbf{x}_{n})^{\top}\right]$
is a positive semi-definite matrices, it can be decomposed as
\begin{eqnarray}
\mathbb{E}_{n}\left[\mathbf{f}(\mathbf{x}_{n})\mathbf{f}(\mathbf{x}_{n})^{\top}\right] &=& \mathbf{Q}\mathbf{D}\mathbf{Q}^{\top},
\end{eqnarray}
where $\mathbf{Q}$ is a orthogonal matrix and $\mathbf{D}$ is a
diagonal matrix. Let $\mathbf{f}'=(f_{1}^{\prime},\ldots,f_{m}^{\prime})^{\top}=\mathbf{Q}^{\top}\mathbf{f}$
and $\mathbf{g}'=(g_{1}^{\prime},\ldots,g_{m}^{\prime})^{\top}=\mathbf{Q}^{\top}\mathbf{g}$,
we have
\begin{eqnarray*}
\mathcal{R}\left[\mathbf{f},\mathbf{q}\right] & = &\mathrm{tr}\left(2\mathbb{E}_{n}\left[\mathbf{Q}^{\top}\mathbf{f}(\mathbf{x}_{n})\mathbf{g}(\mathbf{y}_{n})^{\top}\mathbf{Q}\right] -\mathbb{E}_{n}\left[\mathbf{Q}^{\top}\mathbf{f}(\mathbf{x}_{n})\mathbf{f}(\mathbf{x}_{n})^{\top}\mathbf{Q}\right]\mathbb{E}_{n}\left[\mathbf{Q}^{\top}\mathbf{g}(\mathbf{y}_{n})\mathbf{g}(\mathbf{y}_{n})^{\top}\mathbf{Q}\right]\right)\\
 & = & \mathrm{tr}\left(2\mathbb{E}_{n}\left[\mathbf{f}'(\mathbf{x}_{n})\mathbf{g}'(\mathbf{y}_{n})^{\top}\right]-\mathbb{E}_{n}\left[\mathbf{f}'(\mathbf{x}_{n})\mathbf{f}'(\mathbf{x}_{n})^{\top}\right]\mathbb{E}_{n}\left[\mathbf{g}'(\mathbf{y}_{n})\mathbf{g}'(\mathbf{y}_{n})^{\top}\right]\right)\\
 & = & \sum_{i=1}^{m}2\mathbb{E}_{n}\left[f_{i}^{\prime}(\mathbf{x}_{n})g_{i}^{\prime}(\mathbf{y}_{n})\right]-\mathbb{E}_{n}\left[f_{i}^{\prime}(\mathbf{x}_{n})^{2}\right]\mathbb{E}_{n}\left[g_{i}^{\prime}(\mathbf{x}_{n})^{2}\right]\\
 & \le & \sum_{i=1}^{m}2\mathbb{E}_{n}\left[f_{i}^{\prime}(\mathbf{x}_{n})g_{i}^{\prime}(\mathbf{y}_{n})\right]-\mathbb{E}_{n}\left[f_{i}^{\prime}(\mathbf{x}_{n})g_{i}^{\prime}(\mathbf{y}_{n})\right]^{2}\\
 & \le & m.
\end{eqnarray*}

Under the assumption of $\mathbb{E}_{n}\left[\mathbf{g}(\mathbf{y}_{n})\mathbf{g}(\mathbf{y}_{n})^{\top}\right]=\mathbf{I}$
and $f_{i}(\mathbf{x})=g_{i}(\Theta_{\tau}(\mathbf{x}))$, we have
\begin{eqnarray*}
\left\langle g_{i}\rho_{1},g_{j}\rho_{1}\right\rangle _{\rho_{1}^{-1}} & = &\int\rho_{1}(\mathbf{y})g_{i}(\mathbf{y})g_{j}(\mathbf{y})\mathrm{d}\mathbf{y}\\
 & = & \mathbb{E}_{n}\left[g_{i}(\mathbf{y}_{n})g_{j}(\mathbf{y}_{n})\right]\\
 & = &1_{i=j}
\end{eqnarray*}
and
\begin{eqnarray*}
\mathbb{E}_{n}\left[\mathbf{f}(\mathbf{x}_{n})\mathbf{f}(\mathbf{x}_{n})^{\top}\right] & = & \mathbb{E}_{n}\left[\mathbf{g}(\Theta(\mathbf{x}_{n}))\mathbf{g}(\Theta(\mathbf{x}_{n}))^{\top}\right]\\
 & = & \mathbb{E}_{n}\left[\mathbf{g}(\mathbf{y}_{n})\mathbf{g}(\mathbf{y}_{n})^{\top}\right]=\mathbf{I}
\end{eqnarray*}
with $\mathbf{g}=(g_{1},\ldots,g_{m})^{\top}$ by considering that
$\mathbf{y}_{n}=\Theta_{\tau}(\mathbf{x}_{n})$. Consequently,
\begin{eqnarray*}
\mathcal{R}\left[\mathbf{f},\mathbf{q}\right] & = &\mathrm{tr}\left(2\mathbb{E}_{n}\left[\mathbf{f}(\mathbf{x}_{n})\mathbf{g}(\mathbf{y}_{n})^{\top}\right]-\mathbf{I}\right)\\
 & = &\mathrm{tr}\left(2\mathbb{E}_{n}\left[\mathbf{g}(\mathbf{y}_{n})\mathbf{g}(\mathbf{y}_{n})^{\top}\right]-\mathbf{I}\right)\\
 & = &m,
\end{eqnarray*}
which yields the second conclusion of this proposition.

\section{Proof of Proposition \ref{prop:kvamp}\label{sec:Proof-of-kvamp}}

Let $\{e_{1},e_{2},\ldots\}$ be an orthonormal basis of $\mathcal{L}_{\rho_{0}^{-1}}^{2}$.
We have
\begin{eqnarray*}
\sum_{k}\left\langle \hat{\mathcal{P}}_{\tau}e_{k},\mathcal{P}_{\tau}e_{k}\right\rangle _{\kappa} & = & \sum_{k}\iiiint p_{\tau}(\mathbf{x},\mathbf{y})e_{k}(\mathbf{x})\left\langle \boldsymbol{\varphi}(\mathbf{y}),\boldsymbol{\varphi}(\mathbf{y}')\right\rangle _{\mathbb{H}}\hat{p}_{\tau}(\mathbf{x}',\mathbf{y}')e_{k}(\mathbf{x}')\mathrm{d}\mathbf{x}\mathrm{d}\mathbf{x}'\mathrm{d}\mathbf{y}\mathrm{d}\mathbf{y}'\\
 & = & \iiiint\kappa(\mathbf{y},\mathbf{y}')\cdot\sum_{k}p_{\tau}(\mathbf{x},\mathbf{y})e_{k}(\mathbf{x})\hat{p}_{\tau}(\mathbf{x}',\mathbf{y}')e_{k}(\mathbf{x}')\mathrm{d}\mathbf{x}\mathrm{d}\mathbf{x}'\mathrm{d}\mathbf{y}\mathrm{d}\mathbf{y}'\\
 & = & \iint\kappa(\mathbf{y},\mathbf{y}')\cdot\left(\sum_{k}\iint p_{\tau}(\mathbf{x},\mathbf{y})e_{k}(\mathbf{x})\hat{p}_{\tau}(\mathbf{x}',\mathbf{y}')e_{k}(\mathbf{x}')\mathrm{d}\mathbf{x}\mathrm{d}\mathbf{x}'\right)\mathrm{d}\mathbf{y}\mathrm{d}\mathbf{y}'\\
 & = & \iint\kappa(\mathbf{y},\mathbf{y}')\cdot\sum_{k}\left(\int p_{\tau}(\mathbf{x},\mathbf{y})e_{k}(\mathbf{x})\mathrm{d}\mathbf{x}\right)\\
 &\cdot & \left(\int\hat{p}_{\tau}(\mathbf{x}',\mathbf{y}')e_{k}(\mathbf{x}')\mathrm{d}\mathbf{x}'\right)\mathrm{d}\mathbf{y}\mathrm{d}\mathbf{y}'\\
 & = & \iint\kappa(\mathbf{y},\mathbf{y}')\cdot\left\langle p_{\tau}(\cdot,\mathbf{y})\rho_{0}(\cdot),\hat{p}_{\tau}(\cdot,\mathbf{y})\rho_{0}(\cdot)\right\rangle _{\rho_{0}^{-1}}\mathrm{d}\mathbf{y}\mathrm{d}\mathbf{y}'\\
 & = & \iint\int\rho_{0}(\mathbf{x})\cdot p_{\tau}(\mathbf{x},\mathbf{y})\hat{p}_{\tau}(\mathbf{x},\mathbf{y}')\cdot\kappa(\mathbf{y},\mathbf{y}')\mathrm{d}\mathbf{x}\mathrm{d}\mathbf{y}\mathrm{d}\mathbf{y}'\\
 & = & \int\rho_{0}(\mathbf{x})\left\langle p_{\tau}(\mathbf{x},\cdot),\hat{p}_{\tau}(\mathbf{x},\cdot)\right\rangle _{\mathcal{E}}\mathrm{d}\mathbf{x}.
\end{eqnarray*}
Therefore, $\mathcal{P}_{\tau}$ is an HS operator with
\begin{eqnarray*}
\left\Vert \mathcal{P}_{\tau}\right\Vert _{\mathrm{HS}}^{2} & = & \sum_{k}\left\langle \mathcal{P}_{\tau}e_{k},\mathcal{P}_{\tau}e_{k}\right\rangle _{\mathcal{E}}\\
 & = & \int\rho_{0}(\mathbf{x})\left\Vert p_{\tau}(\mathbf{x},\cdot)\right\Vert _{\mathcal{E}}^{2}\mathrm{d}\mathbf{x}\le B
\end{eqnarray*}
if $\kappa$ is bounded by $B$, and
\begin{eqnarray*}
\left\Vert \mathcal{P}_{\tau}-\hat{\mathcal{P}}_{\tau}\right\Vert _{\mathrm{HS}}^{2} & = & \sum_{k}\left\langle \left(\mathcal{P}_{\tau}-\hat{\mathcal{P}}_{\tau}\right)e_{k},\left(\mathcal{P}_{\tau}-\hat{\mathcal{P}}_{\tau}\right)e_{k}\right\rangle _{\kappa}\\
 & = & \mathcal{D}\left(\mathcal{\hat{P}}_{\tau},\mathcal{P}_{\tau}\right)^{2}.
\end{eqnarray*}

If $\hat{p}_{\tau}(\mathbf{x},\mathbf{y})=\mathbf{f}(\mathbf{x})^{\top}\mathbf{q}(\mathbf{y})$,
we get
\begin{eqnarray*}
\left\Vert \hat{\mathcal{P}}_{\tau}\right\Vert _{\mathrm{HS}}^{2} & = & \iiint\rho_{0}(\mathbf{x})\mathbf{f}(\mathbf{x})^{\top}\mathbf{q}(\mathbf{y})\kappa\left(\mathbf{y},\mathbf{y}'\right)\mathbf{q}(\mathbf{y}')^{\top}\mathbf{f}(\mathbf{x})\mathrm{d}\mathbf{x}\mathrm{d}\mathbf{y}\mathrm{d}\mathbf{y}'\\
 & = & \mathrm{tr}\left(\mathbf{C}_{ff}\mathbf{C}_{qq}\right),
\end{eqnarray*}
\begin{eqnarray*}
\sum_{k}\left\langle \hat{\mathcal{P}}_{\tau}e_{k},\mathcal{P}_{\tau}e_{k}\right\rangle _{\kappa} & = & \int\int\int\rho_{0}(\mathbf{x})\cdot p_{\tau}(\mathbf{x},\mathbf{y})\mathbf{q}(\mathbf{y}')^{\top}\mathbf{f}(\mathbf{x})\cdot\kappa(\mathbf{y},\mathbf{y}')\mathrm{d}\mathbf{x}\mathrm{d}\mathbf{y}\mathrm{d}\mathbf{y}'\\
 & = & \mathrm{tr}\left(\mathbf{C}_{fq}\right),
\end{eqnarray*}
and
\[
\left\Vert \mathcal{P}_{\tau}-\hat{\mathcal{P}}_{\tau}\right\Vert _{\mathrm{HS}}^{2}=-\mathcal{R}_{\mathcal{E}}\left[\mathbf{f},\mathbf{q}\right]+\left\Vert \mathcal{P}_{\tau}\right\Vert _{\mathrm{HS}}^{2}.
\]

\section{Proof of Proposition \ref{prop:optimal-q}\label{sec:Proofs-of-optimal-q}}

Let us first consider the case where $\mathbf{C}_{ff}=\mathbf{I}$.
Then
\begin{eqnarray*}
\mathcal{R_{E}}[\mathbf{f},\mathbf{q}] & = & -\mathrm{tr}\left(\left\langle \mathbf{q},\mathbf{q}^{\top}\right\rangle _{\mathcal{E}}-2\left\langle \mathbf{q},\mathcal{P}_{\tau}\left(\mathbf{f}\rho_{0}\right)^{\top}\right\rangle _{\mathcal{E}}\right)\\
 & = &-\mathrm{tr}\left(\left\langle \mathbf{q}-\mathcal{P}_{\tau}\left(\mathbf{f}\rho_{0}\right),\left(\mathbf{q}-\mathcal{P}_{\tau}\left(\mathbf{f}\rho_{0}\right)\right)^{\top}\right\rangle _{\mathcal{E}}-\left\langle \mathcal{P}_{\tau}\left(\mathbf{f}\rho_{0}\right),\mathcal{P}_{\tau}\left(\mathbf{f}\rho_{0}\right)^{\top}\right\rangle _{\mathcal{E}}\right)\\
 & = & -\sum_{i}\left\Vert q_{i}-\mathcal{P}_{\tau}\left(f_{i}\rho_{0}\right)\right\Vert _{\mathcal{E}}^{2}+\sum_{i}\left\Vert \mathcal{P}_{\tau}\left(f_{i}\rho_{0}\right)\right\Vert _{\mathcal{E}}^{2},
\end{eqnarray*}
which leads to
\begin{eqnarray*}
\arg\max_{\mathbf{q}}\mathcal{R_{E}}[\mathbf{f},\mathbf{q}] & = &\mathcal{P}_{\tau}\left(\mathbf{f}\rho_{0}\right)\\
 & = &\int\rho_{0}(\mathbf{x})p_{\tau}(\mathbf{x},\cdot)\mathbf{f}(\mathbf{x})\mathrm{d}\mathbf{x}
\end{eqnarray*}
and
\begin{eqnarray*}
\mathcal{R_{E}}[\mathbf{f}] & = & \left\Vert \mathcal{P}_{\tau}\left(\mathbf{f}\rho_{0}\right)\right\Vert _{\mathcal{E}}^{2}\\
 & = &\mathrm{tr}\left(\mathbb{E}_{n,n'}\left[\mathbf{f}(\mathbf{x}_{n})\kappa(\mathbf{y}_{n},\mathbf{y}_{n'})\mathbf{f}(\mathbf{x}_{n'})^{\top}\right]\right).
\end{eqnarray*}

We now suppose that $\mathbf{C}_{ff}\neq\mathbf{I}$ and let
\begin{eqnarray*}
\mathbf{f}' & = &\mathbf{C}_{ff}^{-\frac{1}{2}}\mathbf{f},\\
\mathbf{q}' & = &\mathbf{C}_{ff}^{\frac{1}{2}}\mathbf{q}.
\end{eqnarray*}
Becuase $\left\langle \mathbf{f}',\mathbf{f}^{\prime\top}\right\rangle _{\rho_{0}}=\mathbf{I}$,
we have
\[
\arg\max_{\mathbf{q}'}\mathcal{R_{E}}[\mathbf{f}',\mathbf{q}']=\int\rho_{0}(\mathbf{x})p_{\tau}(\mathbf{x},\cdot)\mathbf{f}'(\mathbf{x})\mathrm{d}\mathbf{x}
\]
and
\[
\mathcal{R_{E}}[\mathbf{f}']=\mathrm{tr}\left(\mathbb{E}_{n,n'}\left[\mathbf{f}'(\mathbf{x}_{n})\kappa(\mathbf{y}_{n},\mathbf{y}_{n'})\mathbf{f}'(\mathbf{x}_{n'})^{\top}\right]\right).
\]
Considering that the transition density defined by $(\mathbf{f}',\mathbf{q}')$
is equivalent to that by $(\mathbf{f},\mathbf{q})$ as
\[
\mathbf{f}(\mathbf{x})^{\top}\mathbf{q}(\mathbf{y})=\mathbf{f}'(\mathbf{x})^{\top}\mathbf{q}'(\mathbf{y}),
\]
we can obtain
\begin{eqnarray*}
\arg\max_{\mathbf{q}}\mathcal{R_{E}}[\mathbf{f},\mathbf{q}] & = &\mathbf{C}_{ff}^{-\frac{1}{2}}\mathcal{P}_{\tau}\left(\mathbf{f}'\rho_{0}\right)\\
 & = &\mathbf{C}_{ff}^{-1}\int\rho_{0}(\mathbf{x})p_{\tau}(\mathbf{x},\cdot)\mathbf{f}(\mathbf{x})\mathrm{d}\mathbf{x}
\end{eqnarray*}
and
\begin{eqnarray*}
\mathcal{R_{E}}[\mathbf{f}] & = &\mathcal{R_{E}}[\mathbf{f}']\\
 & = &\mathrm{tr}\left(\mathbb{E}_{n,n'}\left[\mathbf{C}_{ff}^{-\frac{1}{2}}\mathbf{f}(\mathbf{x}_{n})\kappa(\mathbf{y}_{n},\mathbf{y}_{n'})\mathbf{f}(\mathbf{x}_{n'})^{\top}\mathbf{C}_{ff}^{-\frac{1}{2}}\right]\right)\\
 & = &\mathrm{tr}\left(\mathbb{E}_{n,n'}\left[\mathbf{C}_{ff}^{-1}\mathbf{f}(\mathbf{x}_{n})\kappa(\mathbf{y}_{n},\mathbf{y}_{n'})\mathbf{f}(\mathbf{x}_{n'})^{\top}\right]\right).
\end{eqnarray*}

\section{Proof of (\ref{eq:constraint-f})\label{sec:Proof-of-constraint-f}}

Suppose that $(\mathbf{f},\mathbf{q})$ is a solution to $\max\mathcal{R}_{\mathcal{E}}\left[\mathbf{f},\mathbf{q}\right]$
with dimension $m$ under constraint (\ref{eq:normalization}). From
(\ref{eq:normalization}), we have
\[
\mathbf{f}(\mathbf{x})^{\top}\left(\int\mathbf{q}(\mathbf{y})\mathrm{d}\mathbf{y}\right)\equiv1,
\]
which implies that the constant function belongs to the subspace spanned
by $\mathbf{f}$. Thus we can obtain an matrix $\mathbf{R}$ so that
$\mathbf{f}'=(f_{1}^{\prime},\ldots,f_{m}^{\prime})^{\top}=\mathbf{R}\mathbf{f}$
satisfies (\ref{eq:constraint-f}) by Gram-Schmidt orthogonalization,
and $\mathbf{f}'$ and $\mathbf{q}'=\mathbf{R}^{-\top}\mathbf{q}$
also maximizes $\mathcal{R}_{\mathcal{E}}$.

\section{Normalization property of estimated transition density\label{sec:Normalization-property}}

For the transition density $\hat{p}_{\tau}(\mathbf{x},\mathbf{y})=\mathbf{f}(\mathbf{x})^{\top}\mathbf{q}(\mathbf{y})$
obtained by the KVAD algorithm, we have
\[
\int q_{i}(\mathbf{y})\mathrm{d}\mathbf{y}=\frac{1}{N}\sum_{n}f_{i}(\mathbf{x}_{n})=0
\]
for $i>1$. Therefore,
\begin{eqnarray*}
\int\hat{p}_{\tau}(\mathbf{x},\mathbf{y})\mathrm{d}\mathbf{y} & =\mathbf{f}(\mathbf{x})^{\top}\left(\int\mathbf{q}(\mathbf{y})\mathrm{d}\mathbf{y}\right)\\
 & =\mathbf{f}(\mathbf{x})^{\top}(1,0,\ldots,0)^{\top}\\
 & =1.
\end{eqnarray*}

\section{Singular value decomposition of $\tilde{\mathcal{P}}_{\tau}$\label{sec:svd}}

Because $\tilde{\mathcal{P}}_{\tau}$ is also an HS operator from
$\mathcal{L}_{\rho_{0}^{-1}}^{2}$ to $\mathcal{L}_{\mathcal{E}}^{2}$,
there exists the following SVD:
\begin{equation}
\tilde{\mathcal{P}}_{\tau}q=\sum_{i=1}^{\infty}\sigma_{i}\left\langle q,\psi_{i}\right\rangle _{\rho_{0}^{-1}}\phi_{i}.\label{eq:svd}
\end{equation}
Here $\sigma_{i}$ denotes the $i$th largest singular value, and
$\phi_{i},\psi_{i}$ are the corresponding left and right singular
functions. According to the Rayleigh variational principle, for the
$i$th singular component, we have
\begin{equation}
\sigma_{i}^{2}=\max_{q}\left\langle \tilde{\mathcal{P}}_{\tau}q,\tilde{\mathcal{P}}_{\tau}q\right\rangle _{\mathcal{E}}\label{eq:sigmai2}
\end{equation}
under constraints
\begin{equation}
\left\langle q,q\right\rangle _{\rho_{0}^{-1}}=1,\qquad\left\langle q,\psi_{j}\right\rangle _{\rho_{0}^{-1}}=0,\quad\forall j=1,\ldots,i-1\label{eq:orth}
\end{equation}
and the solution is $q=\psi_{i}$.

From the above variational formulation of SVD, we can obtain the following
proposition:
\begin{prop}
The singular functions $\psi_{i}$ of $\tilde{\mathcal{P}}_{\tau}$
satisfies
\[
\left\langle \rho_{0},\psi_{i}\right\rangle _{\rho_{0}^{-1}}=0
\]
if $\sigma_{i}>0$.
\end{prop}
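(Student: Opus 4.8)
The plan is to read the claim directly off the SVD expansion (\ref{eq:svd}) of $\tilde{\mathcal{P}}_{\tau}$, using the single extra fact built into the decomposition (\ref{eq:PF-mean-free}), namely $\tilde{\mathcal{P}}_{\tau}\rho_{0}=0$. First I would dispose of the harmless technical point that $\rho_{0}\in\mathcal{L}_{\rho_{0}^{-1}}^{2}$: indeed $\left\Vert \rho_{0}\right\Vert _{\rho_{0}^{-1}}^{2}=\int\rho_{0}(\mathbf{x})^{2}\rho_{0}(\mathbf{x})^{-1}\,\mathrm{d}\mathbf{x}=\int\rho_{0}(\mathbf{x})\,\mathrm{d}\mathbf{x}=1<\infty$, so the coefficients $\left\langle \rho_{0},\psi_{i}\right\rangle _{\rho_{0}^{-1}}$ make sense and (\ref{eq:svd}) may legitimately be evaluated at $q=\rho_{0}$.

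Next I would substitute $q=\rho_{0}$ into (\ref{eq:svd}) and invoke $\tilde{\mathcal{P}}_{\tau}\rho_{0}=0$ to obtain $0=\sum_{i}\sigma_{i}\left\langle \rho_{0},\psi_{i}\right\rangle _{\rho_{0}^{-1}}\phi_{i}$ in $\mathcal{L}_{\mathcal{E}}^{2}$, the series converging because $\tilde{\mathcal{P}}_{\tau}$ is HS (so $\sum_{i}\sigma_{i}^{2}<\infty$) and $\sum_{i}\left\langle \rho_{0},\psi_{i}\right\rangle _{\rho_{0}^{-1}}^{2}\le\left\Vert \rho_{0}\right\Vert _{\rho_{0}^{-1}}^{2}$ by Bessel's inequality. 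Since the left singular functions $\{\phi_{i}\}$ attached to the nonzero singular values form an orthonormal system in $\mathcal{L}_{\mathcal{E}}^{2}$, pairing both sides with $\phi_{j}$ yields $\sigma_{j}\left\langle \rho_{0},\psi_{j}\right\rangle _{\rho_{0}^{-1}}=0$ for every $j$, hence $\left\langle \rho_{0},\psi_{j}\right\rangle _{\rho_{0}^{-1}}=0$ whenever $\sigma_{j}>0$, which is the assertion.

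I do not anticipate a genuine obstacle; the only spot requiring a sentence of care is the appeal to the abstract SVD of an HS operator (existence of (\ref{eq:svd}), orthonormality of $\{\phi_{i}\}$ and $\{\psi_{i}\}$, and the mode of convergence of the series), which is standard and already used in Appendix~\ref{sec:svd}, so I would simply cite it there. Should one prefer to avoid manipulating the operator series, an alternative is a variational argument from (\ref{eq:sigmai2})--(\ref{eq:orth}): replacing the maximizer $\psi_{i}$ by its $\rho_{0}^{-1}$-orthogonal projection off $\mathrm{span}\{\rho_{0}\}$ changes neither $\tilde{\mathcal{P}}_{\tau}\psi_{i}$ (because $\tilde{\mathcal{P}}_{\tau}\rho_{0}=0$) nor the orthogonality constraints, while it can only decrease $\left\Vert \psi_{i}\right\Vert _{\rho_{0}^{-1}}$; optimality of $\psi_{i}$ then forces its $\rho_{0}$-component to vanish. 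The direct computation above is shorter, and that is the one I would present.
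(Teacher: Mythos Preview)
Your proposal is correct, but the route you choose as primary is not the one the paper takes. The paper argues exactly via your \emph{alternative}: it fixes $i$, supposes $c_i=\langle\rho_0,\psi_i\rangle_{\rho_0^{-1}}\neq0$, replaces $\psi_i$ by its normalized projection $(1-c_i^2)^{-1/2}(\psi_i-c_i\rho_0)$ off $\mathrm{span}\{\rho_0\}$, observes that $\tilde{\mathcal P}_\tau$ is unchanged on this projection (since $\tilde{\mathcal P}_\tau\rho_0=0$) while the $\rho_0^{-1}$-norm strictly drops, so the Rayleigh quotient in (\ref{eq:sigmai2}) strictly increases, contradicting optimality. It then handles the orthogonality constraints (\ref{eq:orth}) for $i\ge2$ by induction, using that all earlier $\psi_j$ are already orthogonal to $\rho_0$.

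Your direct computation---plug $q=\rho_0$ into (\ref{eq:svd}), use $\tilde{\mathcal P}_\tau\rho_0=0$, and read off $\sigma_j\langle\rho_0,\psi_j\rangle_{\rho_0^{-1}}=0$ from orthonormality of the $\phi_j$---is cleaner: it avoids the contradiction and the induction entirely, and it treats all indices at once. The only price is the sentence of care you already include about convergence of the singular expansion, which is standard for HS operators. The paper's variational argument, by contrast, never manipulates the infinite series and works purely at the level of the Rayleigh characterization, which some readers may find more self-contained. Both are valid; yours is shorter.
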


\begin{proof}
We first show that $\left\langle \rho_{0},\psi_{1}\right\rangle _{\rho_{0}^{-1}}=0$
by contradiction. If $\left\langle \rho_{0},\psi_{1}\right\rangle _{\rho_{0}^{-1}}=c_{1}\neq0$,
$\psi_{1}$ can be decomposed as
\[
\psi_{1}=c_{1}\rho_{0}+\tilde{\psi}_{1}.
\]
Because
\begin{eqnarray*}
\left\langle \tilde{\mathcal{P}}_{\tau}\tilde{\psi}_{1},\tilde{\mathcal{P}}_{\tau}\tilde{\psi}_{1}\right\rangle _{\mathcal{E}} & = &\left\langle \tilde{\mathcal{P}}_{\tau}\psi_{1},\tilde{\mathcal{P}}_{\tau}\psi_{1}\right\rangle _{\mathcal{E}},\\
\left\langle \tilde{\psi}_{1},\tilde{\psi}_{1}\right\rangle _{\rho_{0}^{-1}} & = &1-c_{1}^{2},
\end{eqnarray*}
we can get that $\left\langle \tilde{\psi}_{1}^{\prime},\tilde{\psi}_{1}^{\prime}\right\rangle _{\rho_{0}^{-1}}=1$
and
\[
\left\langle \tilde{\mathcal{P}}_{\tau}\tilde{\psi}_{1}^{\prime},\tilde{\mathcal{P}}_{\tau}\tilde{\psi}_{1}^{\prime}\right\rangle _{\mathcal{E}}=\frac{1}{1-c_{1}^{2}}\left\langle \tilde{\mathcal{P}}_{\tau}\psi_{1},\tilde{\mathcal{P}}_{\tau}\psi_{1}\right\rangle _{\mathcal{E}}>\sigma^{2}
\]
with
\[
\tilde{\psi}_{1}^{\prime}=\left(1-c_{1}^{2}\right)^{-\frac{1}{2}}\tilde{\psi}_{1},
\]
which leads to a contradiction. Therefore, $\left\langle \rho_{0},\psi_{1}\right\rangle _{\rho_{0}^{-1}}=0$.

For $\psi_{2}$, we can also show that
\[
\tilde{\psi}_{2}^{\prime}=\left(1-c_{2}^{2}\right)^{-\frac{1}{2}}\left(\psi_{2}-c_{2}\rho_{0}\right)
\]
with $c_{2}=\left\langle \rho_{0},\psi_{2}\right\rangle _{\rho_{0}^{-1}}$
and $\left\langle \psi_{1},\tilde{\psi}_{2}^{\prime}\right\rangle _{\rho_{0}^{-1}}=0$
is a better solution than $\psi_{2}$ for the variational optimization
problem (\ref{eq:sigmai2}, \ref{eq:orth}) if $c_{2}\neq0$, and
thus $\left\langle \rho_{0},\psi_{2}\right\rangle _{\rho_{0}^{-1}}=0$.
By mathematical induction, $\left\langle \rho_{0},\psi_{i}\right\rangle _{\rho_{0}^{-1}}=0$
for all $\sigma_{i}>0$.
\end{proof}
Based on this proposition, $\psi_{i}$ can be approximated by
\begin{equation}
\psi_{i}=\mathbf{u}_{i}^{\top}\boldsymbol{\chi}\label{eq:psi-u}
\end{equation}
with (\ref{eq:constraint-chi}) being satisfied. Substituting the
Ansatz (\ref{eq:psi-u}) into (\ref{eq:sigmai2}, \ref{eq:orth})
and replacing expected values with empirical estimates yields
\begin{eqnarray*}
\mathbf{u}_{i} & = \arg\max_{\mathbf{u}} \frac{1}{N^{2}}\mathrm{tr}\left(\mathbf{u}^{\top}\boldsymbol{\chi}(\mathbf{X})^{\top}\mathbf{G}_{yy}\boldsymbol{\chi}(\mathbf{X})\mathbf{u}\right)\\
\mathrm{s.t.} & \mathbf{u}^{\top}\mathbf{u}=1,\quad\mathbf{u}^{\top}\mathbf{u}_{j}=0\text{ for }j=1,\ldots,i-1.
\end{eqnarray*}
This problem for all $i$ can be equivalently solved by the KVAD algorithm
in Section \ref{subsec:Approximation-with-unknown-f}. Consequently,
$s_{i},s_{i}^{-1}q_{i+1},f_{i+1}\rho_{0}$ are variational estimates
of the $i$th singular value, left singular function and right singular
function of the operator $\tilde{\mathcal{P}}_{\tau}$.

\section{Proof of (\ref{eq:diffusion-distance})\label{sec:Proof-of-distance}}

From (\ref{eq:svd}) and the orthonormality of $\phi_{i}$, we have
\begin{eqnarray*}
D_{\tau}\left(\mathbf{x},\mathbf{x}'\right)^{2} & = &\left\Vert \mathcal{P}_{\tau}\delta_{\mathbf{x}}-\mathcal{P}_{\tau}\delta_{\mathbf{x}'}\right\Vert _{\mathcal{E}}^{2}\\
 & =& \left\Vert \mathcal{\tilde{P}}_{\tau}\delta_{\mathbf{x}}-\mathcal{\tilde{P}}_{\tau}\delta_{\mathbf{x}'}\right\Vert _{\mathcal{E}}^{2}\\
 & =& \left\Vert \sum_{i}\sigma_{i}\left(\psi_{i}(\mathbf{x})\rho_{0}(\mathbf{x})^{-1}-\psi_{i}(\mathbf{x}')\rho_{0}(\mathbf{x}')^{-1}\right)\phi_{i}\right\Vert _{\mathcal{E}}^{2}\\
 & =& \sum_{i,j}\sigma_{i}\sigma_{j}\left(\psi_{i}(\mathbf{x})\rho_{0}(\mathbf{x})^{-1}-\psi_{i}(\mathbf{x}')\rho_{0}(\mathbf{x}')^{-1}\right)\\
  &\qquad\cdot & \left(\psi_{j}(\mathbf{x})\rho_{0}(\mathbf{x})^{-1}-\psi_{j}(\mathbf{x}')\rho_{0}(\mathbf{x}')^{-1}\right)\left\langle \phi_{i},\phi_{j}\right\rangle _{\mathcal{E}}\\
 & =& \sum_{i=1}^{\infty}\sigma_{i}^{2}\left(\psi_{i}(\mathbf{x})\rho_{0}(\mathbf{x})^{-1}-\psi_{i}(\mathbf{x}')\rho_{0}(\mathbf{x}')^{-1}\right)^{2}.
\end{eqnarray*}
If the KVAD algorithm gives the exact approximation of singular components
and $\sigma_{i}=0$ for $i>m-1$, we can get
\[
D_{\tau}\left(\mathbf{x},\mathbf{x}'\right)^{2}=\sum_{i=1}^{m-1}s_{i}^{2}\left(f_{i+1}(\mathbf{x})-f_{i+1}(\mathbf{x}')\right)^{2}.
\]

\section{Comparison between KVAD and conditional mean embedding\label{sec:Comparison-between-KVAD-KME}}

We consider
\[
f_{i}(\mathbf{x})=\mathbf{u}_{i}^{\top}\varphi(\mathbf{x}),
\]
for $i=1,\ldots,N$ in KVAD, and assume that $\mathbf{G}_{xx}=\left[\kappa(\mathbf{x}_{i},\mathbf{x}_{j})\right]\in\mathbb{R}^{N\times N}$
is invertible. Here $\mathbf{u}_{i}$ is the $i$th column of $\mathbf{U}$,
and $\varphi(\mathbf{x})$ is the kernel mapping and can be explicitly
represented as a function from $\mathbb{M}$ to a (possibly infinite-dimensional)
Euclidean space with $\kappa(\mathbf{x},\mathbf{y})=\varphi(\mathbf{x})^{\top}\varphi(\mathbf{y})$
\cite{minh2006mercer}. For a given data set $\{(\mathbf{x}_{n},\mathbf{y}_{n})\}_{n=1}^{N}$,
an arbitrary $\mathbf{u}_{i}$ can be decomposed as
\[
\mathbf{u}_{i}=\varphi(\mathbf{X})^{\top}\mathbf{a}_{i}+\mathbf{u}_{i}^{\perp}
\]
with $\varphi(\mathbf{X})=(\varphi(\mathbf{x}_{1}),\ldots,\varphi(\mathbf{x}_{N}))^{\top}$
and $\varphi(\mathbf{X})^{\top}\mathbf{u}_{i}^{\perp}=\mathbf{0}$,
and
\[
\mathbf{u}_{i}^{\top}\varphi(\mathbf{x}_{n})=\left(\varphi(\mathbf{X})^{\top}\mathbf{a}_{i}\right)^{\top}\varphi(\mathbf{x}_{n}),\quad\forall n.
\]
So, we can assume without loss of generality that each $\mathbf{u}_{i}$
can be represented as a linear combination of $\{\varphi(\mathbf{x}_{1}),\ldots,\varphi(\mathbf{x}_{N})\}$,
and therefore all invertible $\mathbf{A}=(\mathbf{a}_{1},\ldots,\mathbf{a}_{N})$
can generate the equivalent model. For convenience of analysis, we
set $\mathbf{A}=\mathbf{I}$ and
\[
\mathbf{f}(\mathbf{x})=\varphi(\mathbf{X})\varphi(\mathbf{x}).
\]

\[
N\mathbf{C}_{ff}=\sum_{n=1}^{N}\varphi(\mathbf{X})\varphi(\mathbf{x}_{n})\varphi(\mathbf{x}_{n})^{\top}\varphi(\mathbf{X})^{\top}=\mathbf{G}_{xx}^{2}
\]
Then

\[
\mathbf{q}(\mathbf{y})=\sum_{n}\mathbf{G}_{xx}^{-2}\varphi(\mathbf{X})\varphi(\mathbf{x}_{n})\delta_{\mathbf{y}_{n}}(\mathbf{y}),
\]
\begin{eqnarray*}
\hat{p}_{\tau}(\mathbf{x},\mathbf{y}) & =& \mathbf{f}(\mathbf{x})^{\top}\mathbf{q}(\mathbf{y})\\
 & =& \varphi(\mathbf{x})^{\top}\varphi(\mathbf{X})^{\top}\sum_{n}\mathbf{G}_{xx}^{-2}\varphi(\mathbf{X})\varphi(\mathbf{x}_{n})\delta_{\mathbf{y}_{n}}(\mathbf{y})
\end{eqnarray*}
and we can obtain
\begin{eqnarray*}
\mathbb{E}[\varphi(\mathbf{y})|\mathbf{x}] & = &\sum_{n}\varphi(\mathbf{y}_{n})\varphi(\mathbf{x}_{n})^{\top}\varphi(\mathbf{X})^{\top}\mathbf{G}_{xx}^{-2}\varphi(\mathbf{X})\varphi(\mathbf{x})\\
 & = & \varphi(\mathbf{Y})^{\top}\varphi(\mathbf{X})\varphi(\mathbf{X})^{\top}\mathbf{G}_{xx}^{-2}\varphi(\mathbf{X})\varphi(\mathbf{x})\\
 & = & \varphi(\mathbf{Y})^{\top}\mathbf{G}_{xx}^{-1}\left(\kappa(\mathbf{x}_{1},\mathbf{x}),\ldots,\kappa(\mathbf{x}_{N},\mathbf{x})\right)^{\top},
\end{eqnarray*}
which is equivalent to the result of conditional mean embedding \cite{song2013kernel}.

\section{Estimated singular components of Examples \ref{exa:oscillator} and
\ref{exa:Lorenz} with $\xi\protect\neq0$\label{sec:Estimated-singular-components}}

\begin{figure}
\begin{centering}
\includegraphics[width=0.8\textwidth]{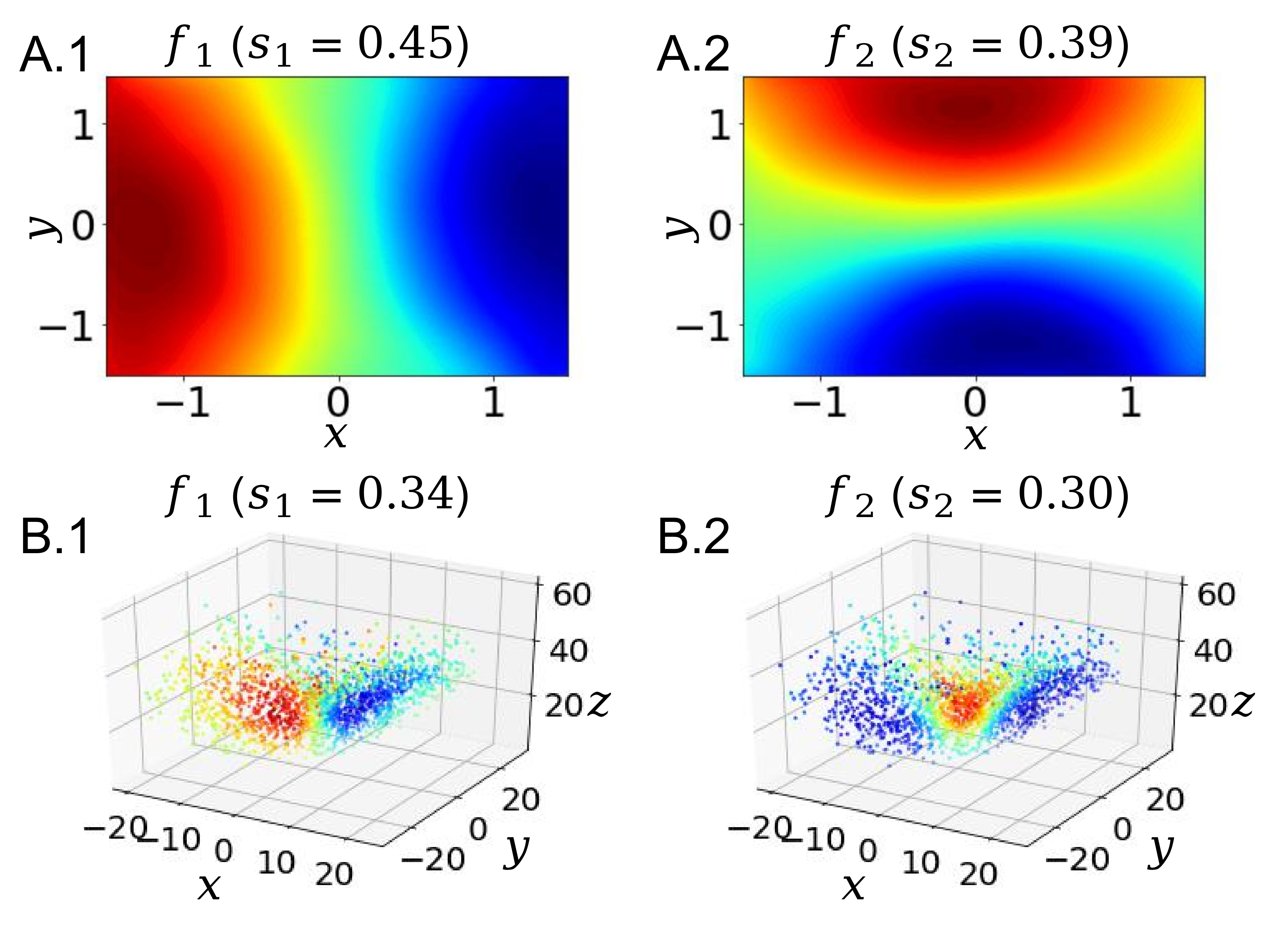}
\par\end{centering}
\caption{\label{fig:noise-singular}The first two singular components computed
by KVAD. (A) The oscillator with $\xi=0.2$. (B) The Lorenz system
with $\xi=0.5$.}
\end{figure}

\FloatBarrier

\bibliographystyle{unsrt}
\bibliography{kvad-submission.bbl}

\end{document}